\DeclareMathOperator{\subgroups}{Subgroups}
\DeclareMathOperator{\covo}{covol}
\DeclareMathOperator{\height}{height}
\DeclareMathOperator{\triangles}{\triangledown}
\DeclareMathOperator{\tripair}{\boxslash}
\DeclareMathOperator{\spairs}{{\mathrm{s}{\tripair}}}
\DeclareMathOperator{\stab}{Stab}
\DeclareMathOperator{\fix}{Fix}
\DeclareMathOperator{\betti}{b_1}
\DeclareMathOperator{\link}{lk}
\DeclareMathOperator{\rk}{rk}
\def\jsj{\textsc{JSJ}}
\newcommand{\HH}{\mathcal{H}}
\newcommand{\XX}{\mathcal{X}}
\def\sl2c{\ensuremath{{SL}(2,\mathbb{C})}}
\def\t1sl2c{{\mathfrak sl}_2\mathbb{C}}
\def\free{\ensuremath{\mathbb{F}}}
\def\zee{\mathbb{Z}}
\def\into{\hookrightarrow}
\newcommand{\Doubletwo}[3]{ \left\{ #1 \mid #3\right\} }
\newcommand{\Doubleone}[1]{ \left\{ #1 \right\} }
\newcommand{\Doublethr}[3]{ \left\{ #1 \right\}_{#3} }
\newcommand{\set}[1]{%
\@ifnextchar:{\Doubletwo{#1}}{\@ifnextchar_{\Doublethr{#1}}{\Doubleone{#1}}}%
}
\newcommand{\grouptwo}[3]{ \langle #1 \mid #3\rangle }
\newcommand{\groupone}[1]{ \langle #1 \rangle }
\newcommand{\group}[1]{%
\@ifnextchar:{\grouptwo{#1}}{\groupone{#1}}%
}
\def\zee{\mathbb{Z}}
\newtheorem{theorem}{Theorem}[section]
\newtheorem*{utheorem}{Theorem}
\newtheorem{lemma}[theorem]{Lemma}
\newtheorem{corollary}[theorem]{Corollary}
\newtheorem{remark}[theorem]{Remark}
\theoremstyle{definition}
\newtheorem{definition}[theorem]{Definition}
\newcommand{\mnote}[1]{}
\newcommand{\lmnote}[1]{}
\title{Strong accessibility for finitely presented groups}
\author{Larsen Louder \& Nicholas Touikan}
\date{\today}
\begin{document}

\maketitle
\begin{abstract}
  A hierarchy of a group is a rooted tree of groups obtained by
  iteratively passing to vertex groups of graphs of groups
  decompositions.  We define a (relative) slender JSJ hierarchy for
  (almost) finitely presented groups and show that it is finite,
  provided the group in question doesn't contain any slender subgroups
  with infinite dihedral quotients and satisfies an ascending chain
  condition on certain chains of subgroups of edge groups.

  As a corollary, slender JSJ hierarchies of hyperbolic groups which
  are (virtually) without $2$--torsion and finitely presented
  subgroups of $\mathrm{SL}_{n}(\zee)$ are both finite. 
\end{abstract}

\section{Introduction}
\thispagestyle{empty}

A group $G$ is said to be \emph{accessible} over a family of subgroups
$\mathcal{C}$ if there is an upper bound to the size of reduced graphs
of groups decompositions of $G$ with edge groups in $\mathcal{C}$. The
classic theorem is due to Grushko and Neumann: If $G=A*B$ is a
nontrivial decomposition of $G$ as a free product then
$\rk(A)+\rk(B)=\rk(G)$, where $\rk(G)$ is the minimal number of
elements needed to generate $G$. This implies that there is an upper
bound to the size of reduced graphs of groups decompositions of a
given finitely generated group $G$ over trivial edge groups. As a
consequence every finitely generated group $G$ admits a free product
decomposition $G\cong G_1*\dotsc*G_p*\free_q$, where each $G_i$ is
freely indecomposable and $\free_q$ is free.

Finitely generated groups are not accessible over the class of small
subgroups, as Dunwoody and Bestvina \& Feighn have produced
counterexamples (with finite and small edge groups, respectively)
~\cite{dunwoody::inaccessible,bf::counterexample}. Finitely presented
groups, on the other hand, are accessible over the class of small
subgroups~\cite{dunwoody::access,bf::folding}. In particular, any
sequence of reduced refinements of graphs of groups decompositions of
a finitely presented group over small edge groups must terminate. A
similar theorem holds for sequences of minimal graph of groups
decompositions of Coxeter groups~\cite{mihalik}.

Dunwoody's theorem, along with Stallings' theorem on groups with
infinitely many ends, implies that any finitely presented group admits
a graph of groups decomposition with finite or one-ended vertex
groups.  (Which we will call the Grushko-Stallings-Dunwoody, or GSD,
decomposition.)

The slender {\jsj} decomposition of a finitely presented group is the
natural generalization of the GSD decomposition to splittings over
slender subgroups, and it is natural to ask if the process of
iteratively passing to vertex groups of slender {\jsj} decompositions
terminates, or, in other words, if a group is \emph{strongly accessible}.

We know of two non-artificial classes of groups which have hierarchies
that must be finite. A Haken hierarchy of a three-manifold gives a
finite hierarchy in this sense: an incompressible two-sided surface in
a three-manifold corresponds to a splitting of its fundamental group
over the fundamental group of the surface. Sela and, independently,
Kharlampovich, Myasnikov and Remeslennikov have shown that the
hierarchy, or \emph{analysis lattice} in Sela's terminology, of a
limit group obtained by alternatingly passing to vertex groups of the
Grushko or abelian {\jsj} decomposition is
finite~\cite{km::irredII,sela::dgog1}. It should be noted that
finiteness of analysis lattices is used to prove finite presentability
of limit groups, rather than the other way around.

Delzant and Potyagailo claim in~\cite{dp} that finitely presented
groups admit finite hierarchies over elementary families (see
Definition~\ref{def::elementaryfamily}), but unfortunately the proof
of~\cite[Lemma~4.10]{dp} is not correct. See
Subsection~\ref{ex::counterexample}. We believe that any proof which
attempts to assign a complexity to each group in a hierarchy is
unlikely to work.

\section{Definitions and results}

A group is \emph{small} if it doesn't contain a non-abelian free
subgroup. An action of a small group on a tree is either
\emph{elliptic} (fixes a point in the tree) \emph{hyperbolic} (has an
axis and acts by translations) \emph{dihedral} (has an axis and acts
dihedrally) or \emph{parabolic} (fixes a point in the boundary but has
no axis)~\cite[p.~453]{bf::folding}. A group is \emph{slender} if all
its subgroups are finitely generated. An action of a slender group on
a tree is either elliptic or stabilizes an axis.

\begin{definition}[Hierarchy]\label{def::hierarchy}
  A \emph{hierarchy} for a group $H$ is a rooted tree of groups $\HH$,
  with $H$ at the root, such that the descendants of a group $L\in\HH$
  are the vertex groups of a nontrivial graph of groups decomposition
  $\Delta_L$ of $L$. A group $L\in\HH$ is \emph{terminal} if $L$ has no
  descendants. 

  A hierarchy is \emph{slender} if all graphs of groups decompositions
  $\Delta_L$, $L\in\HH$, are over slender edge groups.  A slender
  hierarchy is \emph{hyperbolic} if for all $L\in\HH$, if $E<L$ is an
  edge group of $\Delta_L$, then for every $L' \in \HH$ and every
  conjugate $E^g$ of $E$, the action of $E^g\cap L'$ on
  $T_{\Delta_{L'}}$, the tree associated to $\Delta_{L'}$, is either
  elliptic or hyperbolic, but not dihedral.
\end{definition}

Subgroups which are conjugate to elliptic subgroups in every level of
the hierarchy are particularly important.

\begin{definition}[$\HH$--elliptic]
  Denote the groups at level $n$ of the hierarchy by $\HH^n$.  A
  subgroup $V<L\in\mathcal{H}^n$ is \emph{$\HH$--elliptic} if there is
  a chain $L=L_n>L_{n+1}>L_{n+1}>\dotsc$ such that $L_i\in\HH^i$ is a
  vertex group of $\Delta_{L_{i-1}}$ and elements $h_i\in L_i$ such
  that
  \[
  V<L_n\cap L_{n+1}^{h_n}\cap L_{n+2}^{h_{n+1}h_n}\cap L_{n+3}^{h_{n+2}h_{n+1}h_n}\cap\dotsb,
  \]
  where by $L^h$ we mean $hLh^{-1}$.

\end{definition}

We denote the collection of $\HH$--elliptic subgroups by
$\HH^{\infty}.$ Let $\mathcal{EG}^n_{\HH}$ be the collection of
conjugates of edge groups of graphs of groups decompositions $\Delta_L$
as $L$ varies over all groups in $\HH^n$, let
$\mathcal{EG}_{\HH}=\cup_n\mathcal{EG}^n_{\HH}$, and by
$\mathcal{C}^n_{\HH}$ the collection
\[
\left(\subgroups(\mathcal{EG}^n_{\HH}\setminus\HH^{\infty})\right)\cap\HH^{\infty}
\]
and $\mathcal{C}_{\HH}=\cup_n\mathcal{C}^n_{\HH}$. In plain English,
$\mathcal{C}_{\HH}$ is the collection of $\HH$--elliptic subgroups of
non-$\HH$--elliptic edge groups. 

\begin{definition}[Ascending chain condition]
  \label{def::acc}
  We say that $\HH$ satisfies the \emph{ascending chain condition, or
    acc, on $\mathcal{C}_{\HH}$} if every ascending chain
    \[S_n \leq S_{n+1} \leq S_{n+2} \leq \dotsb\]
    $S_i\in\mathcal{C}^i_{\HH}$, 
    stabilizes.
\end{definition}


Recall that a group is \emph{almost finitely presented} if it acts
freely and cocompactly on a connected simplicial complex $X$ with
$H^1(X,\zee_2)=0$~\cite{dunwoody::access}. We will use a slightly less
restrictive notion of almost finitely presented.

\begin{definition}[$\HH$--almost finitely presented]
  Let $H$ be a finitely generated group and $\mathcal{E}$ a family of
  subgroups of $H$. We say that $H$ is \emph{almost finitely presented
    relative to $\mathcal{E}$} if $H$ acts cocompactly on a connected
  triangular complex $X$ such that $H^1(X,\zee_2)=0$ and cell
  stabilizers in $X$ are either slender or conjugate into
  $\mathcal{E}$.

  Let $H$ be finitely generated and let $\HH$ be a hierarchy of
  $H$. Call $H$ \emph{$\HH$--almost finitely presented} if $H$ is
  almost finitely presented relative to $\HH^\infty$,
  the family of $\HH$--elliptic subgroups.
\end{definition}

\subsection*{Results}

\begin{theorem}[Main theorem]
  \label{thm::maintheorem}
  Let $H$ be a finitely generated group and let $\HH$ be a slender
  hyperbolic hierarchy of $H$ such that $\HH$ satisfies the acc on
  $\mathcal{C}_{\HH}$.  If $H$ is $\HH$--almost finitely presented
  then for some $N$, there is a constant $C$ such that each
  $L\in\HH^m$, $m\geq N$, has a finite hierarchy $\XX_L$ of height at
  most $C$ whose terminal groups are either $\HH$--elliptic or
  slender.
\end{theorem}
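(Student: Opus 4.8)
The plan is to convert the algebraic process of descending the hierarchy into a geometric one governed by a single integer complexity, and then to run a Dunwoody-style accessibility argument at every level simultaneously. The almost finite presentation supplies, for the root, a cocompact action on a triangular complex $X$ with $H^1(X,\zee_2)=0$ whose stabilizers are slender or $\HH$--elliptic; I take the complexity of such an action to be the number of $H$--orbits of triangles, recording the cohomological vanishing as the extra data that will bound track counts. The overarching claim is that this complexity is weakly monotone along every branch of $\HH$, so that it stabilizes at a uniform level $N$, and that once it has stabilized the only available decompositions are ``complexity-free'' and therefore terminate in slender or $\HH$--elliptic groups after a bounded number $C$ of steps.

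First I would realize each decomposition $\Delta_L$ by a pattern of tracks. Given the action of $L$ on its triangular complex, a splitting of $L$ over a slender subgroup is dual to an $L$--invariant track $t$; the hyperbolic, rather than dihedral or parabolic, behaviour of slender subgroups guaranteed by the hyperbolicity of $\HH$ is exactly what makes $t$ two-sided, so that $\zee_2$--cohomology is the right receptacle. The vertex groups of $\Delta_L$ then act on the components of the complex cut along $t$. The key input is Dunwoody's bound: because $H^1=0$, the number of disjoint, pairwise non-parallel, essential tracks is bounded linearly in the number of orbits of triangles. Applying this to the full pattern dual to $\Delta_L$ bounds the number of edges of $\Delta_L$ carrying genuine, complexity-reducing splittings in terms of the complexity of $L$.

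Next I would establish that passing to a vertex group does not increase complexity. Cutting $X$ along a track and discarding the components not meeting the chosen vertex group produces a cocompact action of that vertex group on a triangular complex; after resolving and collapsing the pieces introduced by the cut, the number of orbits of triangles does not grow and $H^1(\cdot,\zee_2)=0$ is preserved, while slender and $\HH$--elliptic stabilizers remain slender or $\HH$--elliptic. This is where the relative, $\HH$--elliptic formulation is essential: the possibly large elliptic stabilizers are carried along untouched. Consequently the complexity is a nonnegative integer that is nonincreasing as one descends any branch; being bounded by the complexity at the root, it must stabilize, and by cocompactness the level $N$ at which stabilization occurs can be chosen uniformly over all branches.

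Finally, for $L\in\HH^m$ with $m\ge N$ the complexity of $L$ equals that of all its descendants, so every track dual to a decomposition of $L$, or of an iterated vertex group of $L$, is inessential or parallel into the already-accounted-for pattern; such a decomposition is forced to be over a slender subgroup or to have all vertex groups $\HH$--elliptic. Feeding these degenerate decompositions back into Dunwoody's bound caps the number of iterations needed to reach slender or $\HH$--elliptic leaves, yielding the uniform height $C$ and the desired finite hierarchy $\XX_L$. The main obstacle I anticipate is precisely the monotonicity step: controlling the triangle count and the vanishing of $H^1(\cdot,\zee_2)$ after cutting along a track and re-triangulating, uniformly over the hierarchy and without losing track of which stabilizers are $\HH$--elliptic. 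This bookkeeping, rather than any single inequality, is the technical heart of the argument.
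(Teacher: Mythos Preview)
Your outline captures the coarse shape of the argument---a triangle-count complexity that is nonincreasing along branches and eventually stabilizes---but there are two genuine gaps, and the second is where essentially all of the paper's work lies.

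\textbf{Resolving the tree action.} You write that a splitting of $L$ is ``dual to an $L$--invariant track $t$'', implicitly assuming an $L$--equivariant map $X\to T_{\Delta_L}$. No such map need exist: cell stabilizers in $X$ are slender or $\HH$--elliptic, and a slender stabilizer may act \emph{hyperbolically} on $T_{\Delta_L}$, fixing no point of $T$. The hyperbolicity hypothesis on $\HH$ is not about two-sidedness of tracks; it is what guarantees such a stabilizer at least fixes a point of $\partial T$, so that one can build an equivariant map $X\to T\cup\partial T$ and then carefully excise $\varphi^{-1}(\partial T)$ before reading off a pattern. This introduces new vertices with possibly small, non-slender stabilizers, and the bookkeeping (which you flag as the anticipated obstacle) is more delicate than ``cut and re-triangulate''. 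Indeed the paper exhibits an explicit counterexample to the naive version of this step.

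\textbf{After stabilization.} The serious gap is the last paragraph. You assert that once the triangle count is stable, ``every track dual to a decomposition of $L$\dots is inessential or parallel into the already-accounted-for pattern'', hence further decompositions terminate in bounded depth. This inference is unjustified: constant covolume does \emph{not} force the patterns $\Lambda_i$ to be inessential. The complexes $X_i$ can continue to change---vertices merge, links reorganize---while the number of triangle orbits stays fixed, and there is no a priori bound on how long this can go on. The paper handles this with a substantial secondary argument: it tracks which adjacent pairs of triangles are eventually separated by some $\varphi_j$, introduces a second complexity $D_i$ on equivalence classes of ``stable pairs'', shows $D_i$ eventually stabilizes at some $M_p\ge N_0$, proves that past $M_p$ every unstable edge actually separates $X_i$, and then replaces $X_i$ by a ``tree of planes'' whose non-$\HH$--elliptic vertex links are lines with finite trees attached. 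Only after all of this does one extract the bounded-height hierarchy $\XX_L$. Your proposal does not supply any mechanism to control the process once covolume has stabilized, and Dunwoody's track bound alone does not do it.
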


We apply the main theorem by imposing various conditions on finitely
presented groups which guarantee that their slender {\jsj} hierarchies
are hyperbolic.  Call a slender group $E$ with an infinite dihedral
($D_\infty=\zee_2*\zee_2$) quotient \emph{$D_\infty$--slender}. If $E$
has no $D_\infty$--slender subgroups it is
\emph{$\zee$--slender}. Then a hierarchy for which every edge group is
$\zee$--slender is hyperbolic. Note that finite groups and Tarski
monsters are $\zee$--slender. Call a graph of groups decomposition
$\zee$--slender if its edge groups are $\zee$--slender.

\begin{corollary}
  \label{cor::nodihedral}
  Let $H$ be finitely presented without any $D_\infty$--slender
  subgroups. Let $\HH$ be the hierarchy such that $\Delta_L$ is a
  slender {\jsj} decomposition (See~\cite{fuji::jsj} and
  \S\S~\ref{sec::remarksonacc} and \ref{sec::jsjhierarchy}.) of $L$,
  and such that if $L\in\HH$ is slender then $L$ is terminal. If $\HH$
  satisfies the acc on $\mathcal{C}_{\HH}$ then $\HH$ is finite.
\end{corollary}

Corollary~\ref{cor::nodihedral} also holds for groups which are almost
finitely presented relative to a family of subgroups $\mathcal{E}$,
where each decomposition in $\HH$ is the relative slender {\jsj}
decomposition.

In Section~\ref{sec::jsjhierarchy} we show that the {\jsj} hierarchy of
a relatively hyperbolic group satisfies the acc on
$\mathcal{C}_{\HH}$. Also note that a relatively hyperbolic group
contains a two ended $D_\infty$--slender group if and only if it
contains a non-central element of order two.

\begin{corollary}[(Relatively) Hyperbolic groups]
  \label{cor::relhyp}
  Suppose $G$ relatively hyperbolic, finitely generated, and without a
  non-central element of order two. Then the hierarchy $\HH$ such that
  $\Delta_L$ is the slender {\jsj} decomposition relative to
  peripheral subgroups which are not two-ended is finite. If $G$ is
  virtually without $2$-torsion (for example, if $G$ is residually
  finite) hyperbolic group then the slender {\jsj} hierarchy of $G$ is
  finite.
 
  If $G$ is toral relatively hyperbolic the same holds but
  for the full abelian {\jsj} decomposition.
\end{corollary}

If $\left[G,G_1\right]<\infty$ and $H<G$ acts nontrivially on some
tree $T$ with slender edge stabilizers then $H\cap G_1$ is finite
index in $H$ and acts on $T$ with slender edge stabilizers;
furthermore, if $T$ corresponds to the (relative) slender
{\jsj} decomposition of $H$ then $T/(H\cap G_1)$ is obtained from the
(relative) slender {\jsj} decomposition of $H\cap G_1$ by possibly
removing valence two slender vertex groups or cutting enclosing vertex
groups of the (relative) slender {\jsj} decomposition of $H\cap G_1$
along essential simple closed curves. In particular, the non-slender
non-enclosing vertex groups of $T/(H\cap G_1)$ are finite index
subgroups of the non-slender non-enclosing vertex groups of the
{\jsj} of $G_1$. Thus if the slender {\jsj} hierarchy of $G$ has a
non-slender non-enclosing vertex group at level $n$ then so does
$G_1$. The corollary then follows for residually finite hyperbolic
groups by observing that any residually finite hyperbolic group is
virtually torsion free; in particular virtually without two-torsion.


Similarly, suppose $G$ is a finitely presented subgroup of
$\mathrm{SL}_n(\zee)$. Since $\mathrm{SL}_n(\zee)$ is virtually
torsion free, so is $G$. The union $S$ of a chain
\[ S_1 \leq S_2 \leq \ldots \]
of slender subgroups of $G$ is virtually solvable by Tit's
alternative, and by \cite[\S2 Corollary~1]{segal2005polycyclic} $S$ is
virtually polycyclic, hence slender. Any slender hierarchy $\HH$ of
$G$ therefore satisfies the acc on $\mathcal{C}_{\HH}$. Since strong
accessibility passes to finite index overgroups we have:

\begin{corollary}
  \label{cor::slnz}
  The slender JSJ hierarchy of any finitely presented subgroup of
  $\mathrm{SL}_n(\zee)$ is finite.
\end{corollary}


\cite[3.2]{dp} holds if we impose the ascending chain condition on
finite subgroups of elements in an elementary family.  (See
Definition~\ref{def::elementaryfamily})

\begin{theorem}[\emph{c.f.} {\cite[3.2]{dp}}]
  \label{dp::variation}
  Let $G$ be finitely presented and let $\mathcal{C}$ be an elementary
  family of subgroups of $G$. Suppose that any ascending chain of
  finite subgroups of elements of $\mathcal{C}$ eventually stabilizes,
  and that two-ended subgroups of $G$ are $\zee$--slender. Then $G$
  has a hierarchy $\HH$ over edge groups in $\mathcal{C}$ such that
  terminal groups of $\HH$ are either in $\mathcal{C}$ or don't split
  over an element of $\mathcal{C}$.
\end{theorem}

Note that the hierarchy $\HH$ in Theorem~\ref{dp::variation} is not a
priori canonical, whereas the non-slender vertex groups appearing in
the slender {\jsj} hierarchy are.

\subsection*{Acknowledgements}

We would like to thank Thomas Delzant and Leonid Potyagailo for some
interesting conversations.  Thanks to Daniel Groves for help with an
earlier version of the paper, as well as to Peter Ha\"issinsky and
Matias Carrasco Piaggio for encouragement, and to Thierry Coulbois for
bringing our attention to~\cite{guirardel::core}. We also thank Lee
Mosher, Misha Kapovich, and Ian Agol for input on finite subgroups of
relatively hyperbolic groups.

The first author was supported in part by EPSRC grant EP/D073626/2 and
NSF grant DMS-0602191. The second author was supported by an NSERC
PDF, the ANR grant ANR-2010-BLAN-116-01 GGAA
and a Fields Postdoctoral Fellowship.

\section{Dunwoody/Delzant-Potyagailo resolution}
\label{sec::dpconstruction}

Given a simplicial complex $X$ with a free $G$--action and a $G$--tree
$T$ there is always a $G$--equivariant map from $X$ to $T$. If $T$ is
simplicial and the map is chosen reasonably well, preimages of
midpoints of edges form a subset of $X$ called a \emph{pattern}, the
connected components of which are two-sided \emph{tracks}. Patterns
were introduced by Dunwoody in~\cite{dunwoody::access} to show that
(almost-) finitely presented groups are accessible, and used
in~\cite{dunwoodyjsj} to construct a {\jsj} decomposition for finitely
presented groups over slender edge groups. 

If the action on $X$ is not free there is in general no
$G$--equivariant map $X\to T$. The construction below is a
generalization of~\cite[4.1-4.9]{dp},
after~\cite{dunwoody::access}.  Like them, we construct a class of
spaces such that if $T$ is a (suitable) $G$--tree then there are
$G$--equivariant maps $X\to \hat T = T\cup\partial T$, where $\partial
T$ is the boundary at infinity of $T$.

\subsection*{$\HH$--complexes}

All complexes in the sequel are at most two-dimensional.

\begin{definition}[$\HH$--complex]\label{def::hhcomplex}
  Let $H$ be a finitely generated group with a hierarchy $\HH$ over a
  class of groups $\mathcal{C}$. For $G<H$, an $\HH$--complex for $G$
  is a connected simplicial complex $X$ with $H^1(X,\zee_2)=0$, $X/G$
  compact, and with cell stabilizers in $\mathcal{C}$ or
  $\HH^{\infty}$. An $\HH$--complex is \emph{nondegenerate} if it
  contains a triangle, and is \emph{degenerate} if it doesn't.

  The number of orbits of triangles in an $\HH$--complex $X$ is
  denoted by $\covo(X)$.
\end{definition}

Denote the stabilizer of a cell $c\subset X$ by $\stab_X(c)$, the
pointwise stabilizer by $\stab^+_X(c)$, and if $K$ acts on a space $Z$
denote the fixed point set of $K$ by $\fix_Z(K)$. If $X$ and $Z$ are
clear from the context we will omit them.

Let $X$ be a triangular CW--complex. (A CW--complex whose two-cells
have at most three sides.) If $X$ is not simplicial, let $Y$ be the
triangular complex whose vertices are the vertices of $X$, whose edges
are determined by (unordered) pairs of distinct vertices which are the
endpoints of some edge in $X$, and whose triangles are determined by
(again unordered) triples of distinct vertices of $X$ which are
contained in a triangle. There is a continuous map $X\to Y$ which maps
cells to cells of equal or lower dimension, and if the dimensions are
the same then it maps interiors of cells homeomorphically to their
images.

The next lemma is obvious.

\begin{lemma}
  If $X$ is simply connected then $Y$ is simply connected. If
  $\mathrm{H}^1(X,\zee_2)=1$ then $\mathrm{H}^1(Y,\zee_2)=1$.
\end{lemma}

If $X\to Y$ is not a homeomorphism then we say that $X$ is
\emph{reducible}, and if it is then $X$ is \emph{reduced}. The space
$Y$ constructed above is said to be obtained from $X$ by
\emph{reducing}. If $X$ is equipped with an (combinatorial) action of
a group $G$ then $Y$ naturally inherits an action of $G$, and if cell
stabilizers in $X$ are in some class $\mathcal{C}$ (for example if $X$
is an $\HH$--complex and $\mathcal{C}$ is the collection of slender or
$\HH$--elliptic subgroups) which is closed under passing to subgroups
and index-two supergroups then cell stabilizers in $Y$ are elements of
$\mathcal{C}$ as well.

\begin{remark}
  It is \emph{not} necessarily the case that $\stab^+(c)=\stab(c)$ for
  a cell $c$ in $Y$, even if this is the case in $X$.
\end{remark}

\subsection*{Cut trees}

A \emph{cutpoint} in a simplicial complex $X$ is a vertex $v$ such
that $X\setminus v$ has more than one component, and a
\emph{cutpoint-free component} of $X$ is a maximal connected
subcomplex which isn't separated by a cutpoint. Any simplicial complex
is a union of cutpoint-free components which are either disjoint or
meet in a single vertex. Suppose $X$ is connected and let
$\{Y_{\alpha}\}$ be the collection of cutpoint-free components in $X$,
let $\{v_{\beta}\}$ be the collection of cutpoints, and finally let
$T_X$ be the tree whose vertex set is the collection of cutpoint-free
components and cutpoints of $X$, and whose edges are given by pairs
$(Y_{\alpha},v_{\beta})$ such that $v_{\beta}\in Y_{\alpha}$.

Suppose that $X$ is connected, $H^1(X,\zee_2)=0$, and that $X$ doesn't
have any cutpoints. A \emph{cut-edge} is an edge $e$ such that
$X\setminus e$ has at least two components. We mimic the definition
above and let the cut-edge tree $S_X$ be the tree whose vertices are
the maximal connected cut-edge-free components $\{Y_{\alpha}\}$ of $X$
and cut-edges $\{e_{\gamma}\}$ in $X$, and whose edges are given by
pairs $(Y_{\alpha},e_{\gamma})$ such that $e_{\gamma}\subset
Y_{\alpha}$.

\subsection*{Resolving actions on trees}
\label{sec::tracks}

Suppose $X$ is an $\HH$--complex for $G<G'\in\HH$ and let $T$ be the
tree associated to $\Delta_{G'}$. Cell stabilizers in $X$ might not
act elliptically in $T$, and there is therefore no $G$--equivariant
map $X\to T$. If $\HH$ is a slender hyperbolic hierarchy then each
cell stabilizer in $X$ fixes a point in $\hat T = T \cup \partial
T$. We exploit this fact to produce a $G$--equivariant map $X\to\hat
T$, a la Dunwoody and Delzant-Potyagailo. 

\begin{definition}\label{def::resolution}
  Let $X$ be a reduced triangular complex with an action of a group
  $G$, and let $G$ act on a tree $T$ without inversions. A
  $G$--equivariant map $\rho\colon X\to \hat T$ such that vertices are
  mapped to vertices or points in $\partial T$, interiors of edges are
  mapped homeomorphically to interiors of arcs in $T$, and each
  intersection of a triangle $t$ and a connected component of the
  preimage of a midpoint of an edge of $T$ is an embedded closed arc
  connecting distinct edges of $t$ is a \emph{resolution}.
\end{definition}

\begin{lemma}
  \label{lem::resolvinglemma}
  Let $G$ act on a triangular complex $X$, and let $T$ be the tree
  associated to a graph of groups decomposition of $G$. Suppose that
  \begin{itemize}
  \item vertex stabilizers in $X$ act either elliptically,
    hyperbolically, or parabolically on $T$, and 
  \item if $W\subset
    X$ is a connected subset of $X^1$ such that for all cells $e\subset W$
    $\stab_X(e)$ acts hyperbolically or parabolically in $T$. Then all
    stabilizers of vertices in $W$ have a common fixed point in
    $\partial T$.
  \end{itemize}
  Then there is a resolution $\rho\colon X\to \hat T$.
\end{lemma}

\begin{remark}
  Lemma~\ref{lem::resolvinglemma} is sharp, which is one reason our
  proof of strong accessibility only works for splittings over slender
  edge groups or edge groups in an elementary family. These are
  seemingly the only natural hypotheses which guarantee the lemma
  holds.
\end{remark}

\begin{proof}
  Since cell stabilizers in $X$ don't act dihedrally on $T$,
  $\fix_{\hat T}(\stab(c))$
  is not empty for all cells $c$ in $X$.

  Let $v$ be a representative of an orbit of vertices in $X$. Choose
  arbitrarily a point $\rho(v)\in \fix_{\hat T}(\stab(v))\in \hat T$,
  provided that if $\stab(v)$ is elliptic then $\rho(v)\in T^0$, and
  that if $v$ and $w$ are contained in a connected subset $W\subset
  X^1$
  as in the second bullet then $\rho(v)=\rho(w)$ is a point fixed by
  all stabilizers of cells in $W$.  For each $g\cdot v$ in the orbit
  of $v$, set $\rho(g\cdot v)=g\cdot\rho(v)$. Repeat over all orbits
  of vertices.

  Let $e$ be a representative of an orbit of edges of $X$, and let $v$
  and $w$ be the endpoints of $e$. Suppose that $\stab_X(e)$ inverts
  $e$ and acts elliptically in $T$. Let $b_e$ be the fixed point of
  the stabilizer of $e$ and choose $c\in\fix_T(\stab_X(e))$. Let $r_v$
  and $r_w$ be the two arcs/rays in $T$ connecting $c$ to $\rho(v)$
  and $\rho(w)$, respectively, and set $\rho(b_e)$ to be the furthest
  point in the non-empty, but possibly degenerate, arc $r_v\cap r_w$
  from $c$. In particular if $\rho(v)=\rho(w)\in\partial T$ then set
  $\rho(e)=\rho(v)$. Then $\stab(e)$ stabilizes the (possibly
  degenerate) arc in $T$ connecting $\rho(v)$ and $\rho(w)$. Map
  $\left[b_e,v\right]$ homeomorphically to the (possibly degenerate)
  interval connecting $\rho(v)$ and $\rho(b_e)$ in $\hat T$ and extend
  equivariantly. If $\stab(e)$ doesn't invert $e$ then map $e$ in the
  obvious way to the (again, possibly degenerate) arc connecting
  $\rho(v)$ to $\rho(w)$. Repeat over all orbits of edges.

  The edges of a triangle $t$ determine a (possibly degenerate) tripod
  in $T$. Map $t$ to $T$ equivariantly ($t$ may have nontrivial
  stabilizer), and extend equivariantly to all translates of $t$. See
  Figure~\ref{fig::extendtotriangles}. Repeat over all orbits of
  triangles. (As with edges which are inverted, some care must be
  taken since $\stab(t)$ might not fix $t$.)
\end{proof}

\begin{figure}[ht]
  \centerline{\includegraphics[width=\textwidth]{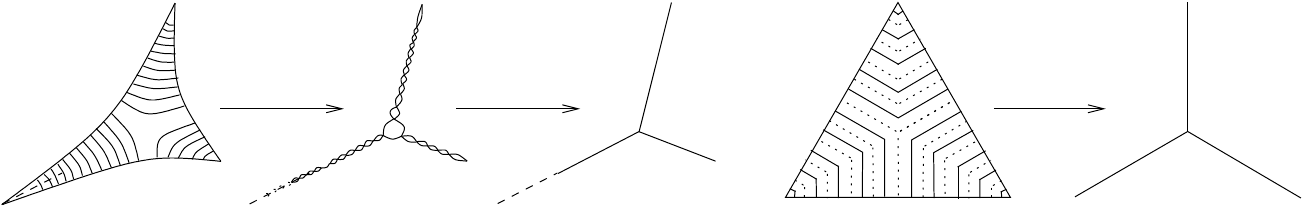}}
  \caption{The first sequence represents the map on a typical
    triangle. The vertex in the lower left corner is sent to
    $\partial T$, which is indicated by the dotted lines. The center
    picture represents the triangle after crushing preimages of
    midpoints of edges, which introduces bigons and creates a new
    triangle. This is essentially~\cite[Dessins 1 et 2]{dp}. The
    second map illustrates a case where
    $\stab(t)\neq\stab^+(t)$. Preimages of vertices are represented by
    dotted lines and preimages of midpoints of edges are solid
    lines. If the stabilizer of $t$ acts as $\mathrm{S}_3$ on $t$ then
    $\rho$ must send a tripod connecting the centers of edges of $t$
    to a point in $T$.}
\label{fig::extendtotriangles}
\end{figure}

\begin{corollary}[{c.f.~\cite[\S4.1]{dp}}]
  \label{cor::resolvinglemma}
  Given a slender hyperbolic hierarchy $\HH$ of a group $H$, an
  $\HH$--complex $X$ for $G<H$, and a slender $G$--tree $T$ from
  $\HH$, there is a resolution $\rho\colon X\to \hat T$.

  If $\mathcal{C}$ is an elementary family in a group $G$, $X$ a
  reduced $G$--complex with cell stabilizers in $\mathcal{C}$, and $T$
  is a $G$--tree with edge stabilizers in $\mathcal{C}$ such that no
  element of $\mathcal{C}$ acts dihedrally on $T$, then there is a
  resolution $\rho\colon X\to \hat T$.
\end{corollary}

Let $G$, $X$, and $T$ be as in the first paragraph of
Corollary~\ref{cor::resolvinglemma}. We divide our treatment of the
map $\rho$ constructed in Lemma~\ref{lem::resolvinglemma} into two
cases.

\subsubsection*{Type 1: $\rho^{-1}(\partial T)$ doesn't contain an edge of $X$.}

Let $X^*=X\setminus\rho^{-1}(\partial T)$. For each edge $e$ of $T$
let $m_e$ be the midpoint of $e$, and let $\Lambda'$ be the
one-complex $\rho^{-1}(\cup_e\{m_e\})\subset X^*$. Call a connected
component $\lambda$ of $\Lambda'$ \emph{essential} if both components
of $X\setminus \lambda$ are unbounded and $\lambda$ is not parallel to
the link of a vertex, and let $\Lambda$ be the union of all essential
components of $\Lambda'$. For the remainder of the paper $\Lambda'$
and $\Lambda$ will be used to indicate patterns constructed in the
manner described above.

Let $X^*/\Lambda$ be the space obtained by collapsing each connected
component of $\Lambda$ to a point, and let $X_T$ be the space obtained
by reducing. (See~\cite[4.2]{dp}.) The stabilizers of the vertices
corresponding to connected components of $\Lambda$ are slender, and
there is a $G$--equivariant map $X^*/\Lambda\to X_T$. A simple case of
this procedure is illustrated in Figure~\ref{fig::counterexample}.

\begin{lemma}[{\emph{c.f.} \cite[Lemma~4.9]{dp}}]
  \label{lem::simplyconnected}
  If $\pi_1(X)=1$ then $\pi_1(X^*/\Lambda)=1$. If $H^1(X,\zee_2)=0$
  then $H^1(X^*/\Lambda,\zee_2)=0$.
\end{lemma}

See Figure~\ref{fig::octagon}.

\begin{proof}
  Let $Z$ be a connected component of $X^*$, let $Y$ be its closure in
  $X$, and let $W$ be the connected component of $X^*/\Lambda$
  corresponding to $Z$. Let $B$ be the second barycentric subdivision
  of $Y$, $C$ the union of simplices in $B$ which miss
  $\rho^{-1}(\partial T)$, $A$ the union of simplices in $B$ which
  meet $\rho^{-1}(\partial T)$, and let $L=A\cap C$.

  Consider the Mayer-Vietoris sequence for the pair of subspaces $A$
  and $C$.
  \[
  \dotsb\to H^1(Y,\zee_2)\to H^1(A,\zee_2)\oplus H^1(C,\zee_2)\to H^1(L,\zee_2)\to\dotsb
  \]
  Each connected component of $A$ is contractible, the inclusion
  $C\into Z$ is a homotopy equivalence, and since $H^1(Y,\zee_2)=0$,
  there is an exact sequence
  \[
  0\to H^1(Z,\zee_2)\to H^1(L,\zee_2)
  \]
  It therefore suffices to show that any closed path in $L$ dies under
  the map $Z\to W$. Similarly for the fundamental group: all $\pi_1$
  is carried by $L$.

  Let $d$ be a reduced edge path in $L$. Then there is a vertex a
  vertex $v\in Y\cap \rho^{-1}(\partial T)$ such that $d$ is homotopic
  in the star of $v$ to an edge path $e_0\dotsb e_{n-1}$ in the link
  of $v$. Let $t_0,\dotsc,t_{n-1}$ be the triangles in $Y$ with
  $e_i,\{v\}\subset t_i$, and let $f_0,\dotsc,f_{n-1}$ be the edges
  connecting $v$ to $e_i$ such that the boundary of $t_i$ is formed by
  $e_i$, $f_i$, and $f_{i+1}$. Since $\rho(e_i)\neq \rho(v)$ there is
  an edge $g$ of $T$ such that $\rho^{-1}(m_e)\cap t_i$ is a single
  arc $a_i$ connecting $f_i$ to $f_{i+1}$ in $t_i$ for each $i$. This
  implies the collection of arcs $\{a_i\}$ forms a closed loop, and
  $d$ is homotopic \emph{in $Z$} to the path $a_0\dotsb a_{n-1}$ in
  $\Lambda\subset Z$. Thus $d$ has nullhomotopic image in $W$. Hence
  if $X$ is acyclic then $W$ is as well, and if $X$ has trivial
  fundamental group then so does $W$.
\end{proof}

\begin{figure}
\labellist
\pinlabel $e_0$ [br] at 31 224
\pinlabel $e_1$ [br] at 124 307
\pinlabel $e_2$ [bl] at 254 287
\pinlabel $e_3$ [bl] at 322 175
\pinlabel $e_{n-1}$ [tr] at 37 89
\pinlabel $f_0$ [t] at 63 158
\pinlabel $f_1$ [bl] at 91 254
\pinlabel $f_2$ [br] at 186 276
\pinlabel $a_0\dotsb a_{n-1}$ [l] at 366 128
\pinlabel $\dotsb$ [tl] at 269 50
\pinlabel $\dotsb$ [tr] at 136 16
\pinlabel $v$ [bl] at 365 232
\endlabellist
  \centerline{\includegraphics[width=.35\textwidth]{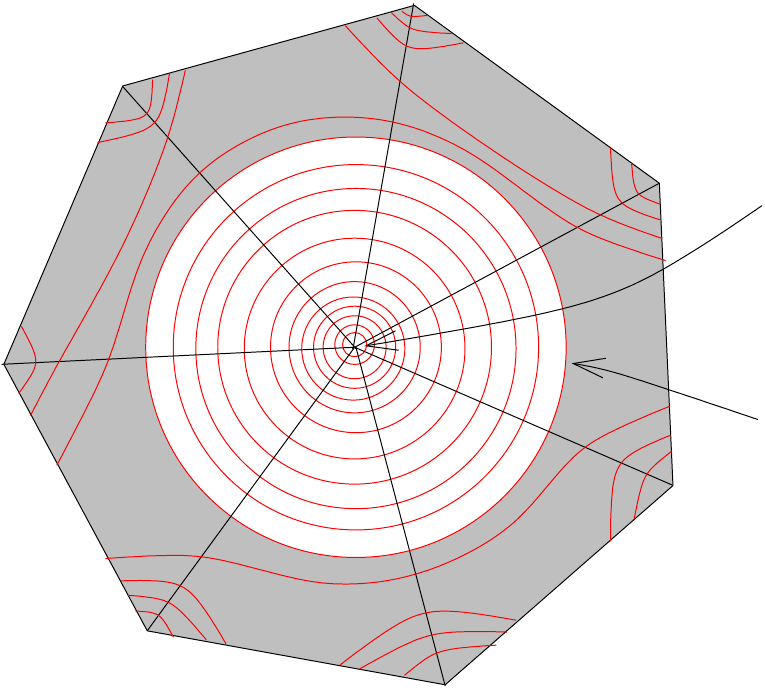}}
  \caption{Illustration for Lemma~\ref{lem::simplyconnected}. The
    outer loop represents the path $e_0\dotsb e_{n-1}$, which is
    homotopic (with the homotopy represented by the shaded annulus) to
    the path $a_0\dotsb a_{n-1}$, which is a path in $\Lambda$, hence $d$
    is trivial in $X^*/\Lambda$.}
\label{fig::octagon}
\end{figure}

\subsubsection*{Type 2: $\rho^{-1}(\partial T)$ contains an edge of
  $X$.}

Let $X_T$ be the complex obtained by collapsing each connected
component of $\rho^{-1}(\partial T)$ to a point and reducing. 


\begin{lemma}
  \label{lem::collapsesmall}
  Vertex stabilizers in $X_T$ are either 
  \begin{itemize}
  \item vertex stabilizers from $X,$
  \item act hyperbolically in $T$, and are HNN extensions of subgroups
    of edge groups in $T/G$, or
  \item act parabolically in $T$, and are strictly ascending HNN
    extensions of subgroups of edge groups in $T/G$.
  \end{itemize}
  Each edge stabilizer in $X_T$ is, or has an index two subgroup which
  is, either a subgroup of a conjugate of an edge group in $T/G$.

  If $X$ is an $\HH$--complex for $G<L\in\HH$ and $T=T_{\Delta_L}$
  then vertex stabilizers in the third category are small but not
  slender. If $X$ is a $G$--complex with stabilizers in an elementary
  family $\mathcal{C}$ then stabilizers in $X_T$ are in $\mathcal{C}$.
\end{lemma}

Let $\HH$ be a slender hierarchy and 
suppose $X$, $G$ and $T$ are as above. Since $X_T$ potentially has
small but not slender vertex stabilizers, i.e., $\rho\colon X\to \hat
T$ is type 2, it is \emph{not}, in general, an $\HH$--complex.

\begin{proof}
  Let $v$ be a vertex in $X_T$. Denote by $\stab(v)$ by $G_v$. If $v$
  is a vertex from $X$ then clearly $G_v$ is a vertex stabilizer from
  $X$. Suppose $v$ corresponds to a connected component $V\subset X$
  of $\rho^{-1}(p)$, with $p\in\partial T$. Clearly $\stab(V)=G_v$. If
  $gV\cap V\neq\emptyset$ then $g\cdot p=p$ and therefore $gV=V$,
  hence $V/G_v\to X/G$ is an embedding and $G_v$ fixes $p$.

  Suppose $X$ is an $\HH$--complex. Since $X/G$ is compact, $G_v$ acts
  cocompactly on $V$, and since $V$ has slender cell stabilizers,
  $G_v$ is finitely generated. Let $T'\subset T$ be the union of axes
  of elements of $G_v$, and consider the quotient $T'/G_v$. Since
  $G_v$ fixes an end in $T$, it fixes an end in $T'$, and $T'/G_v$ is
  therefore an ascending HNN extension with slender edge groups, hence
  is either small or slender, and if small, it acts parabolically on
  $T$.

  Let $e$ be an edge in $X_T$. The stabilizer of $e$ either fixes the
  endpoints $v$ and $w$ of $e$ or has an index two subgroup which
  does. Let $V$ and $W$ be the preimages of $v$ and $w$ in $X$. Then
  $\stab^+(e)$ stabilizes $V$ and $W$. If $V$ and $W$ are connected
  components of preimages of points in $\partial T$ then $\stab^+(e)$
  fixes a pair of distinct points in $\partial T$, hence is a subgroup
  of an edge group of $T/G$. If $V$ is not a connected component of a
  preimage of any point in $\partial T$ then $V$ is a vertex, and
  $\stab^+(e)$ stabilizes a half-line in $T$, hence is (conjugate
  into) a subgroup of an edge group of $T/G$ in this case as well.
\end{proof}

\section{Remarks on accessibility}
\label{sec::remarksonacc}

Kneser finiteness, existence of a Haken hierarchy, and
Dunwoody/Bestvina-Feighn accessibility all rely on uniform upper
bounds to the number of disjoint non-parallel tracks in two-complexes.

\begin{theorem}
  \label{thm::kneser}
  Let $Y$ be a finite two-dimensional simplicial complex. There is a
  constant $C=C(Y)$ such that if $\Lambda\subset Y$ is a pattern with
  at least $C$ connected components, then two connected components of
  $\Lambda$ are parallel.
\end{theorem}

Bestvina and Feighn's accessibility theorem for finitely presented
groups is used to show that (almost) finitely presented groups have
slender {\jsj} decompositions.

\begin{utheorem}[{\cite[Main theorem]{bf::folding}}]
  \label{bf::accessibility}
  Let $G$ be a finitely presented group. Then there exists an integer
  $\gamma(G)$ such that the following holds:

  If $T$ is a reduced $G$--tree with small edge stabilizers, then the
  number of vertices in $T/G$ is bounded by $\gamma(G)$.
\end{utheorem}

They remark that this holds for almost finitely presented groups, and
that the proof goes through without change. In fact, slightly more is
true:

\begin{itemize}
\item Let $G$ be finitely generated and let $\mathcal{E}$ be a
  collection of subgroups of $G$. The conclusion holds if $G$ acts
  cocompactly on a simplicial complex $X$ with
  $\mathrm{H}^1(X,\zee_2)$ and cell stabilizers which are either
  slender, ascending HNN extensions of slender groups, or are
  conjugate into $\mathcal{E}$, provided that elements of
  $\mathcal{E}$ act elliptically in $T$.
\item Similarly, if $G$ has a finite hierarchy $\mathcal{X}$ over edge
  groups which are slender, are ascending HNN extensions of slender
  subgroups, and such that each terminal leaf of $\mathcal{X}$ is
  either slender, an ascending HNN extension of a slender groups, or
  of the form in the previous bullet, then the conclusion holds,
  provided that elements of $\mathcal{E}$ act elliptically in $T$.
\end{itemize}

Accessibility of (almost) (relatively) finitely presented groups
ensures the existence of a (relative) {\jsj} decomposition. In
\S\ref{sec::jsjhierarchy} we will use the above to define a (relative)
{\jsj} hierarchy of (almost) (relatively) finitely presented
groups. Since slender subgroups of finitely presented groups are not
necessarily finitely presented we must work in the category of almost
finitely presented groups.

\section{Counterexample to the proof of {\cite{dp}}}
\label{ex::counterexample}

This section illustrates some of the problems with the approach to
strong accessibility taken by~\cite{dp}. We sketch their proof below,
and try to make clear why such an approach is unlikely to be
successful.

\begin{definition}[{\cite[\S1.1]{dp}}]
  \label{def::elementaryfamily}
  An \emph{elementary family} in a group $G$ is a family $\mathcal{C}$
  of subgroups which is
  \begin{itemize}
    \item Closed under conjugation and passing to infinite subgroups.
    \item Each infinite subgroup of $\mathcal{C}$ is contained in a
      unique maximal element of $\mathcal{C}$, and each ascending
      union of finite elements of $\mathcal{C}$ is an element of
      $\mathcal{C}$.
    \item Elements of $\mathcal{C}$ are \emph{small}, i.e., if
      $A\in\mathcal{C}$ acts minimally on an infinite tree $T$ then
      either $A$ fixes a point in $\partial T$ or stabilizes a pair of
      distinct points in $\partial T$. (Equivalently, no element of
      $\mathcal{C}$ contains a free subgroup.)
    \item If $C\in\mathcal{C}$ is infinite, maximal in $\mathcal{C}$,
      and $C^g=C$, then $g\in C$. In particular, for a maximal $C$ and
      $C'<C$, the normalizer of $C'$ is contained in $C$.
    \end{itemize}
\end{definition}

Elementary families are designed to mimic the family of elementary
subgroups of a (relatively) hyperbolic group, i.e. the class of
virtually cyclic (or peripheral, in the relative case) subgroups.

Delzant and Potyagailo claim:

\begin{theorem}[{\cite[3.2]{dp}}]
  \label{dptheorem}
  Let $G$ be finitely presented and let $\mathcal{C}$ be an elementary
  family of subgroups of $G$. Then $G$ has a hierarchy $\HH$ over edge
  groups in $\mathcal{C}$ such that terminal groups of $\HH$ are
  either in $\mathcal{C}$ or don't split over an element of
  $\mathcal{C}$.
\end{theorem}

Note that we are only able to prove this theorem (\ref{dp::variation})
with the additional hypotheses that the collection of finite subgroups
of elements of $\mathcal{C}$ satisfies the ascending chain condition.

Let $G$ be finitely presented, and suppose $G$ acts simplicially,
cocompactly, and without inversions on a simply connected triangular
complex $X$ with cell stabilizers in an elementary family
$\mathcal{C}$. The quotient $X/G$ is then a complex of groups. The
\emph{T--invariant} of $G$ is the ordered pair
\[
T(G)=\min\{(\vert X/G\vert,\betti(X/G))\mid X\mbox{ as above.} \}
\]
where $\vert X/G\vert$ is the number of triangles in $X/G$. The set of such
ordered pairs is ordered lexicographically.

Let $G$ act on $X$ which achieves its T--invariant. Let $T$ be a
$G$--tree with edge stabilizers in $\mathcal{C}$, and let
$\varphi\colon X\to \hat T$ be the map constructed above. Suppose that
$X^*$ is not connected. Then $G$ splits as a graph of groups
over edge groups in $\mathcal{C}$ so that vertex groups have strictly
lower T--invariant, therefore we may assume that $X^*$ is
connected. They first construct $X_T$ and its cutpoint tree
$T_{X_T}$. Edge stabilizers in $T_{X_T}$ are elements of $\mathcal{C}$
and the quotient graph of groups decomposition has vertex groups
$G_i$, each of which acts as above on a cutpoint free component $X_i$
of $X_T$.

\begin{figure}[ht]
\labellist
\pinlabel $X^*$ [r] at 1 195
\pinlabel $X^*/\Lambda$ [r] at 1 107
\pinlabel $X_T$ [r] at 1 36
\pinlabel $X/\zee$ [r] at 477 195
\pinlabel $(X^*/\Lambda)/\zee$ [r] at 477 107
\pinlabel $X_T/\zee$ [r] at 477 36
\pinlabel $\Lambda$ at 233 258
\pinlabel $\not\exists$ [l] at 577 117
\endlabellist
\centerline{\includegraphics[width=.80\textwidth]{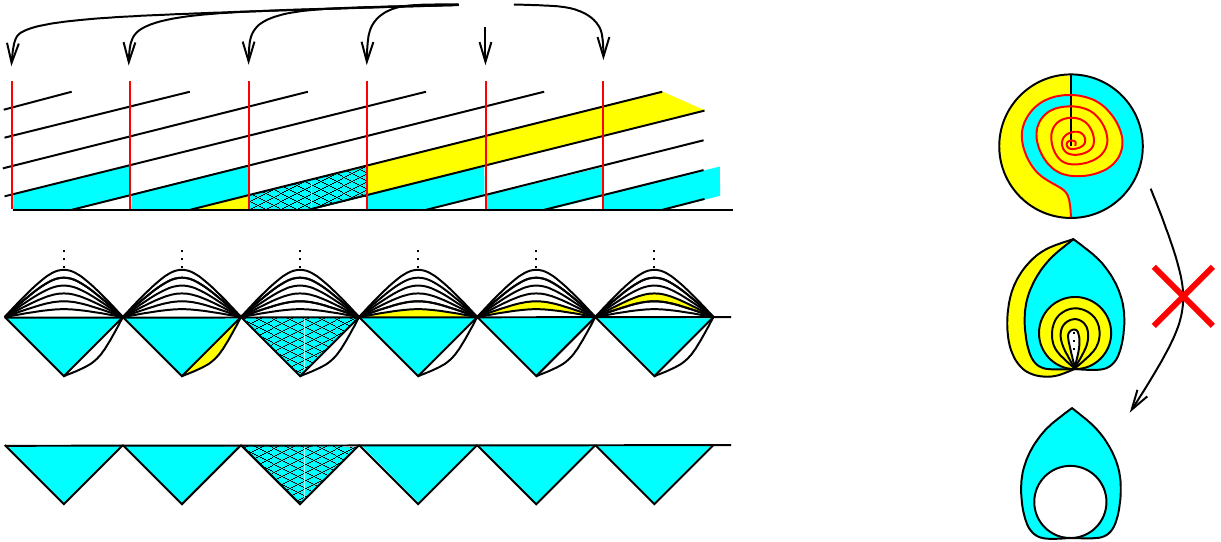}}
\caption{\cite[Lemma~4.10]{dp} violates the no retraction theorem. }
\label{fig::counterexample}
\end{figure}

In order to conclude that the $G_i$ have lower T--invariants than $G$
they claim erroneously in~\cite[Lemma~4.10]{dp} that there is a map
$X/G\to X_T/G$ with connected fibers, inducing an isomorphism on
fundamental groups, hence that
\begin{equation}
  \label{eq::wrong}
  \betti(X/G)\geq\sum_i\betti(X_i/G_i)+\betti(T_{X_T}/G),
\end{equation}
where each $X_i$ is a representative of an orbit of cutpoint free
components of $X_T$ and $G_i$ is its stabilizer. They then argue that
if $\betti(T_{X_T}/G)=0$, then there is more than one orbit of cutpoint
free components in $X_T$. In particular, $\vert X/G\vert >\vert
X_i/G_i\vert$, and if $\betti(T_{X_T}/G)>0$ and there is only one
orbit of cutpoint free components then $\vert X/G\vert\geq\vert
X_1/G_1\vert$ and $\betti(X/G)>\betti(X_1/G_1)$, hence $T(G)>T(G_1)$.

The argument used to prove~(\ref{eq::wrong}) is incorrect. See
Figure~\ref{fig::counterexample}. Consider a disk $X/\zee$ with one
orbifold point, labeled $\zee$, and two edges, such that the boundary
of the disk defines a generator. Then $X$ is the (orbihedral)
universal cover of the disk. The cyclic group $\zee$ acts on the line
$T$ and $\pi_1(X_T/\zee)=\zee$. Any continuous map from a disk to a
circle, however, has nullhomotopic image, hence there is in general no
$G$--equivariant map $X\to X_T$.

It is important to note that this is only a counterexample to the
proof of \cite[Lemma~4.10]{dp}, not its conclusion: we know a priori
that the disk $X/\zee$ doesn't achieve the T--invariant of
$\zee$. Their proof however, never actually uses the hypothesis that
$X$ achieves $T(G)$. Any such proof \emph{must} either show that
(\ref{eq::wrong}) holds or that $\vert X/G \vert$ is not minimal. We think it's
unlikely that a proof of strong accessibility along these lines
exists.

\section{Products of trees}
\label{sec::productsoftrees}

Let $G$ be a group and let $T$ and $T'$ be a pair of $G$--trees with
$T/G=\Delta$ and $T'/G=\Omega$. Then $G$ acts diagonally on the
product $T\times T'$. If $S\subset T\times T'$ is a simply connected
$G$--invariant subcomplex, the quotient $S/G$ is a square complex,
which, after~\cite{fuji::jsj}, should be thought of as a complex
of groups. Denote the projections $T\times T'\to T$ and $T\times T'\to
T'$ by $\pi_T$ and $\pi_{T'}$, respectively.

Let $S\subset T\times T'$ be a simply connected $G$--invariant
subset, and suppose that point preimages under $\pi_T$ are
connected. For $v$ a vertex of $\Delta$ let $\widetilde v$ be a lift
of $v$ to $T$. Then $G_v\cong\stab(\widetilde v)$ acts on the tree
$\pi_{T}^{-1}(\widetilde v)$ and $\pi_T^{-1}(\widetilde v)/G_v$ is a
graph of groups decomposition of $G_v$. Similarly, if $m_e$ is a
midpoint of an edge of $\Delta$ then $\pi_T^{-1}(\widetilde m_e)/G_e$
is a graph of groups decomposition of $G_e$.

\begin{theorem}[{\emph{c.f.}~\cite[Th\'eor\`eme principal,
    Corollaire~8.2]{guirardel::core}}]
  \label{lem::core} 
  There is a connected simply connected $G$--equivariant square
  complex $S\subset T\times T'$ of minimal covolume such that the
  projections $S\to T'$ and $S\to T$ have connected point preimages.

  Moreover, if $G$ and all vertex and edge stabilizers are finitely
  generated and the $G$-trees $T, T'$ are cocompact 
  then $S/G$ may be taken to be compact.
\end{theorem}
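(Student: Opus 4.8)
The plan is to construct the subcomplex $S$ as Guirardel's core of the two trees, and then establish the two refinements claimed: that the projections have connected point preimages, and that $S/G$ is compact under the finite generation hypotheses.

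First I would recall Guirardel's core construction. Given the two $G$--trees $T$ and $T'$, each edge $e$ of $T$ determines a partition of $\partial T$ into two half-trees, and hence a pair of "directions"; the core is built from the product $T\times T'$ by discarding those open quadrants (products of half-trees, one in each factor) which are "light" in the sense of carrying no essential measure, keeping only the heavy quadrants. Concretely, for a point $(x,y)\in T\times T'$ one assigns to each pair of directions $(\delta,\delta')$ at $(x,y)$ the number of $G$--orbit representatives whose associated quadrant is contained in $\delta\times\delta'$; the core is the set of points all of whose quadrants are heavy. Guirardel's Th\'eor\`eme principal gives that this set is connected and simply connected, and Corollaire~8.2 controls its covolume; I would cite these directly as permitted, so the existence of a connected, simply connected, $G$--invariant $S\subset T\times T'$ of minimal covolume is in hand. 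The work is therefore to arrange the two additional properties.

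For connectivity of point preimages, I would argue fiberwise. Fix a vertex $\wt v$ of $T$ and consider $\pi_T^{-1}(\wt v)\cap S$, a subset of $\{\wt v\}\times T'$. The point is that the core, being an intersection of heavy quadrants, is "convex" in each factor in the appropriate sense: if $(\wt v,y_0)$ and $(\wt v,y_1)$ both lie in $S$, then the geodesic in $\{\wt v\}\times T'$ joining them should also lie in $S$, because any quadrant threatening a point on this geodesic already threatens one of the endpoints. I would make this precise using Guirardel's characterization of core membership via the direction-counting function and the fact that moving along a $T'$--geodesic only changes the $T'$--direction, leaving the $T$--direction at $\wt v$ fixed; heaviness is monotone under the relevant inclusions of quadrants. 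The same argument with the roles of $T$ and $T'$ exchanged handles $\pi_{T'}$. If the raw core fails to have connected fibers at the level of midpoints or edges, I would pass to the reduction mentioned in the statement, but I expect the convexity argument to deliver connectedness directly.

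The main obstacle is the cocompactness of $S/G$. Minimal covolume from Corollaire~8.2 already bounds the number of $G$--orbits of squares \emph{once one knows the core is cocompact}, but Guirardel's core can fail to be cocompact in general; this is exactly where the finite generation of $G$, of all vertex and edge stabilizers, together with minimality of $T'$, must be used. The strategy is to show there are only finitely many $G$--orbits of squares in $S$. I would fix a fundamental domain for $G\actson T$ consisting of finitely many vertices and edges (finite since $\Delta=T/G$ is a finite graph), and for each such cell $c$ analyze the fiber $\pi_T^{-1}(c)\cap S$, which carries an action of $G_c=\stab(c)$. By the fiberwise description above this fiber is a subtree of $T'$ invariant under $G_c$; finite generation of $G_c$ plus minimality of $T'$ should force $G_c$ to act cocompactly on the relevant subtree (a finitely generated group acting on a minimal tree with the core-heaviness constraint has finitely many orbits of edges in its heavy part). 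Summing the finitely many fiberwise orbit counts over the finitely many orbits of cells of $T$ yields finitely many $G$--orbits of squares, hence $S/G$ is compact. I expect the delicate point to be ruling out an "escaping" end of the fiber that is heavy but not captured cocompactly; this is precisely where minimality of $T'$ and the absence of parabolic behavior for the finitely generated stabilizers must be invoked to keep the heavy region bounded modulo $G_c$.
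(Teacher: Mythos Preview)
The paper does not supply a proof of this statement at all: it is stated with the attribution ``\emph{c.f.}~\cite[Th\'eor\`eme principal, Corollaire~8.2]{guirardel::core}'' and then used as a black box in Section~\ref{sec::hstructure}. So there is no in-paper argument to compare your proposal against; what you have written is a sketch of Guirardel's own proof, which is exactly what the authors are importing.

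That said, a couple of points in your sketch are worth tightening. Your description of the core---points whose quadrants record ``the number of $G$--orbit representatives''---is not Guirardel's definition; a quadrant is \emph{heavy} when some element of $G$ has its axis (in each factor) eventually contained in it, and the core is the complement of the light quadrants. The convexity of fibers, which you want for connected point preimages, is already part of Guirardel's Th\'eor\`eme principal, so you may cite it rather than re-derive it. For cocompactness, your argument begins by fixing a finite fundamental domain for $G\actson T$; note that the theorem as stated only assumes $T'$ is minimal, not $T$, and the paper explicitly remarks just after the statement that ``we allow $T$ to not be a minimal invariant tree.'' In the paper's applications $T/G=\Delta$ is always a finite graph of groups, so your assumption is harmless there, but as a proof of the theorem as stated you would need either to justify why $S$ only lives over a cocompact part of $T$, or to add the hypothesis that $T/G$ is finite. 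Guirardel's Corollaire~8.2 handles this via a finiteness hypothesis on both quotient graphs together with finite generation of edge groups, which is what the authors are invoking.
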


Recall that a $G$--tree is \emph{minimal} if it has no proper
invariant subtrees, and that if a $G$--tree doesn't have a global
fixed point (elliptic) or fixed end (parabolic) then there is a unique
minimal invariant subtree. Though it is customary to assume that all
$G$--trees are minimal, it is necessary to relax this restriction.

Let $S_{\Delta,\Omega}=S/G$, and denote the projections
$S_{\Delta,\Omega}\to\Delta,\Omega$ by $\pi_{\Delta}$ and
$\pi_{\Omega}$. Then $S_{\Delta,\Omega}$ is finite, and if $v$ is a
vertex in $\Delta$, then $\pi^{-1}_{\Delta}(v)$ is a graph of groups
decomposition of $G_v$ corresponding to its action on
$\pi_{T}^{-1}(\widetilde v)$. Similarly, if $m$ is a midpoint of an
edge of $\Delta$ then $\pi^{-1}_{\Delta}(m)$ is a graph of groups
decomposition of $G_e$. Likewise for vertex and edge groups of
$\Omega$. See Figure~\ref{fig::squarecomplex}.

\begin{figure}
\labellist
\pinlabel $\Delta$ [l] at 1917 74
\pinlabel $\pi_{\Delta}$ [l] at 1461 179
\pinlabel $\Omega$ [l] at 2100 447
\pinlabel $\pi_{\Omega}$ [b] at 1987 451
\pinlabel $S\subset T\times T'$ at 225 460
\pinlabel $T$ at 221 202
\pinlabel $T'$ at 639 4602
\pinlabel $\pi_T$ [l] at 234 323
\pinlabel $\pi_{T'}$ [b] at 523 463
\pinlabel $S_{\Delta,\Omega}$ [br] at 1001 595
\endlabellist
\centerline{\includegraphics[width=.95\textwidth]{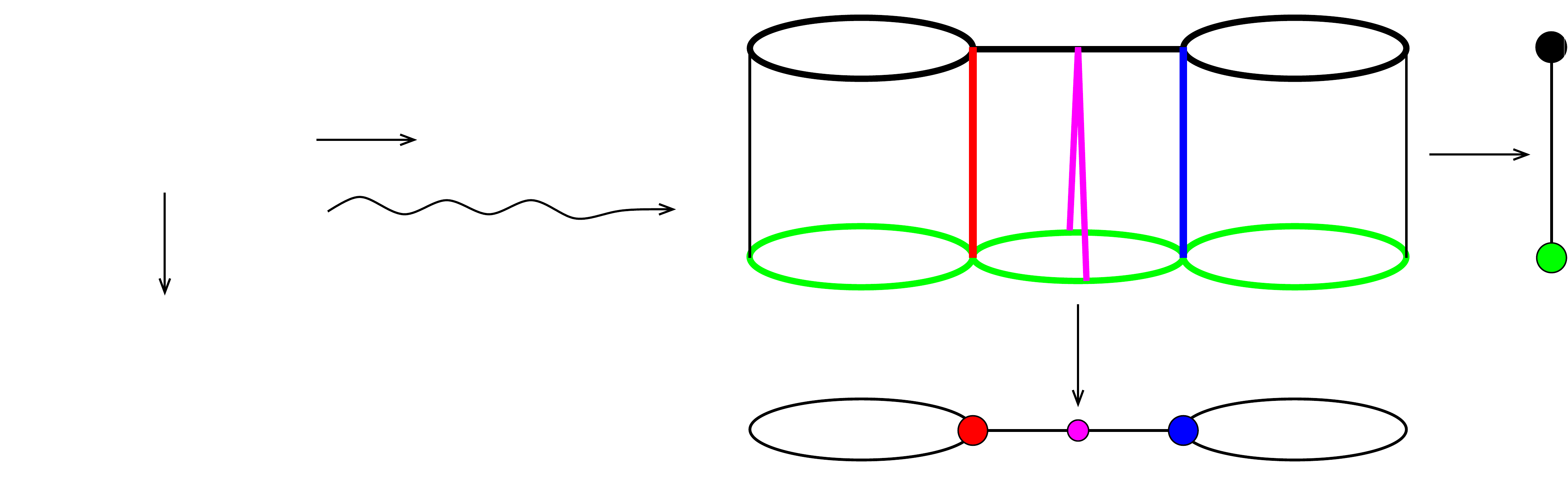}}
\caption{Projections of $S_{\Delta,\Omega}$ to $\Delta$ and $\Omega$.}
\label{fig::squarecomplex}
\end{figure}

The complex $S_{\Delta,\Omega}$ should be thought of as a complex of
groups which interpolates between $\Delta$ and $\Omega$, and is used
extensively in~\cite{fuji::jsj} in the construction of the slender {\jsj} 
decomposition of a finitely presented group.

\begin{lemma}
  \label{lem::newhstructure}
  Let $G$ be a finitely generated group, let $\mathcal{Y}_G$ be a
  finite hierarchy of $G$ over finitely generated edge groups, and let
  $\Delta_G$ be a graph of groups decomposition of $G$ with finitely
  generated edge groups. Then for each vertex group $G_v$ of
  $\Delta_G$ there is a finite hierarchy $\mathcal{X}_{G_v}$, of the
  same height as $\mathcal{Y}_G$,such that vertex and edge groups at
  level $n$ in $\mathcal{X}_{G_v}$ are subgroups of vertex and edge
  groups at level $n$ of $\mathcal{Y}_G$.
\end{lemma}

\begin{proof}
  Let $\Omega_L$ be the decomposition of $L$ for
  $L\in\mathcal{Y}_G$. For each vertex group $L'$ of $\Omega_L$ define
  inductively
  $\Delta_{L'}=\pi_{\Omega}^{-1}(L')\subset\mathcal{S}_{\Delta_L,\Omega_L}$. See
  Figure~\ref{fig::findnewhstructure}.  
  Consider a (nonterminal) vertex group $L'$ of $\Omega_L$ and the
  projection
  \[S_{\Delta_{L'},\Omega_{L'}}\to
  \Delta_{L'}=\pi_{\Omega}^{-1}(L')\subset S_{\Delta_L,\Omega_L}\]
  There is then a natural map $\Pi$ from this hierarchy of square
  complexes to $\Delta$, and if $G_v$ is a vertex group of $\Delta$
  then $\mathcal{X}_{G_v}=\Pi^{-1}(v)$ is a hierarchy of $G_v$ with
  the desired properties.
\end{proof}

\begin{figure}[ht]
\labellist
\pinlabel $\Delta$ [br] at 13 101
\pinlabel $\Omega_L$ [t] at 42 58
\pinlabel $S_{\Delta_L,\Omega_L}$ [b] at 100 116
\pinlabel $\Omega_{L'}$ [t] at 145 29
\pinlabel $S_{\Delta_{L'},\Omega_{L'}}$ [b] at 200 89
\pinlabel $\Omega_{L''}$ [t] at 244 1
\pinlabel $S_{\Delta_{L''},\Omega_{L''}}$ [b] at 301 59
\pinlabel $G_v$ [t] at 1 83
\pinlabel $\Pi^{-1}(G_v)$ [tl] at 313 25
\pinlabel $L'$ [tl] at 76 55
\pinlabel $\Delta_{L'}$ [l] at 139 118
\endlabellist
\centerline{\includegraphics[width=.8\textwidth]{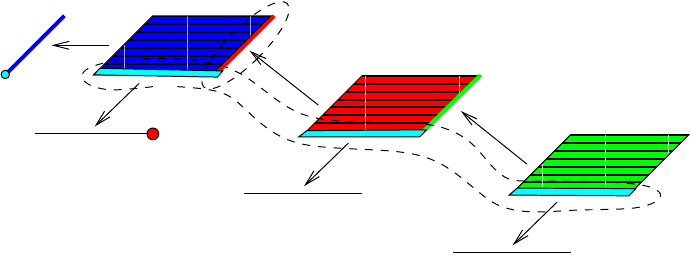}}
\caption{A piece of the hierarchy of square complexes associated to
  $\mathcal{Y}_G$ and $\Delta$.}
\label{fig::findnewhstructure}
\end{figure}

\section{$\HH$--structures}
\label{sec::hstructure}

There is no natural way to construct an $\HH$--complex for each group
$L$ in a hierarchy $\HH$ without losing control over the number of
orbits of triangles. To get around this difficulty we define an
$\HH$--structure, which is a combination of a hierarchy $\mathcal{X}$
(distinct from $\HH$!) and a collection of 
$\HH$--complexes (Recall Definition \ref{def::hhcomplex}.) for
terminal groups in $\mathcal{X}.$ We associate, to each group $L$ in a
slender hyperbolic hierarchy $\HH$, an $\HH$--structure $\mathcal{X}_L$
and show in \S\ref{sec::stabilizing} that for groups sufficiently far
down the hierarchy the $\HH$--structures may be taken to have terminal
vertex groups which are $\HH$--elliptic or slender. This will complete
the proof of Theorem~\ref{thm::maintheorem}.

\begin{definition}
  Let $\HH$ be a hierarchy of a group $H$. An \emph{$\HH$--structure}
  on a group $L<H$ is a finite hierarchy over slender or small edge
  groups equipped with an action, for each terminal group $V$ of
  $\mathcal{X}_L$, of $V$ on an $\HH$--complex $X_V$. If $X_V$ is not
  a point then $V$ is \emph{nondegenerate}, and if $X_V$ is a point
  then $V$ is \emph{degenerate}. The complexity $\covo(\mathcal{X}_L)$
  is the total number of orbits of triangles over all $X_V$ under
  their respective actions.

  An $\HH$--structure for $L$ with slender edge groups will be denoted
  by $\mathcal{X}_L$, and if an $\HH$--structure for $L$ possibly has
  small edge groups then it is denoted by $\mathcal{Y}_L$. If
  $L\in\HH$ has an $\HH$--structure $\mathcal{Y}_L$ then we require
  that all non-slender small edge groups in $\mathcal{Y}_L$ act
  parabolically in the Bass-Serre tree $T_{\Delta_L}$.

  The \emph{height} of an $\HH$--structure on $L$ is the number of
  levels in $\mathcal{X}_L$, and is denoted by
  $\height(\mathcal{X}_L)$. We denote graphs of groups decompositions
  in $\mathcal{H}$--structures by $\Omega$, i.e., if
  $L'\in\mathcal{X}_L$ then the graph of groups decomposition of $L'$
  will be denoted by $\Omega_{L'}$.
\end{definition}

\begin{figure}
  \labellist
  \pinlabel $V\curvearrowright X_V$ at 299 259
  \endlabellist
  \centerline{\includegraphics[width=.5\textwidth]{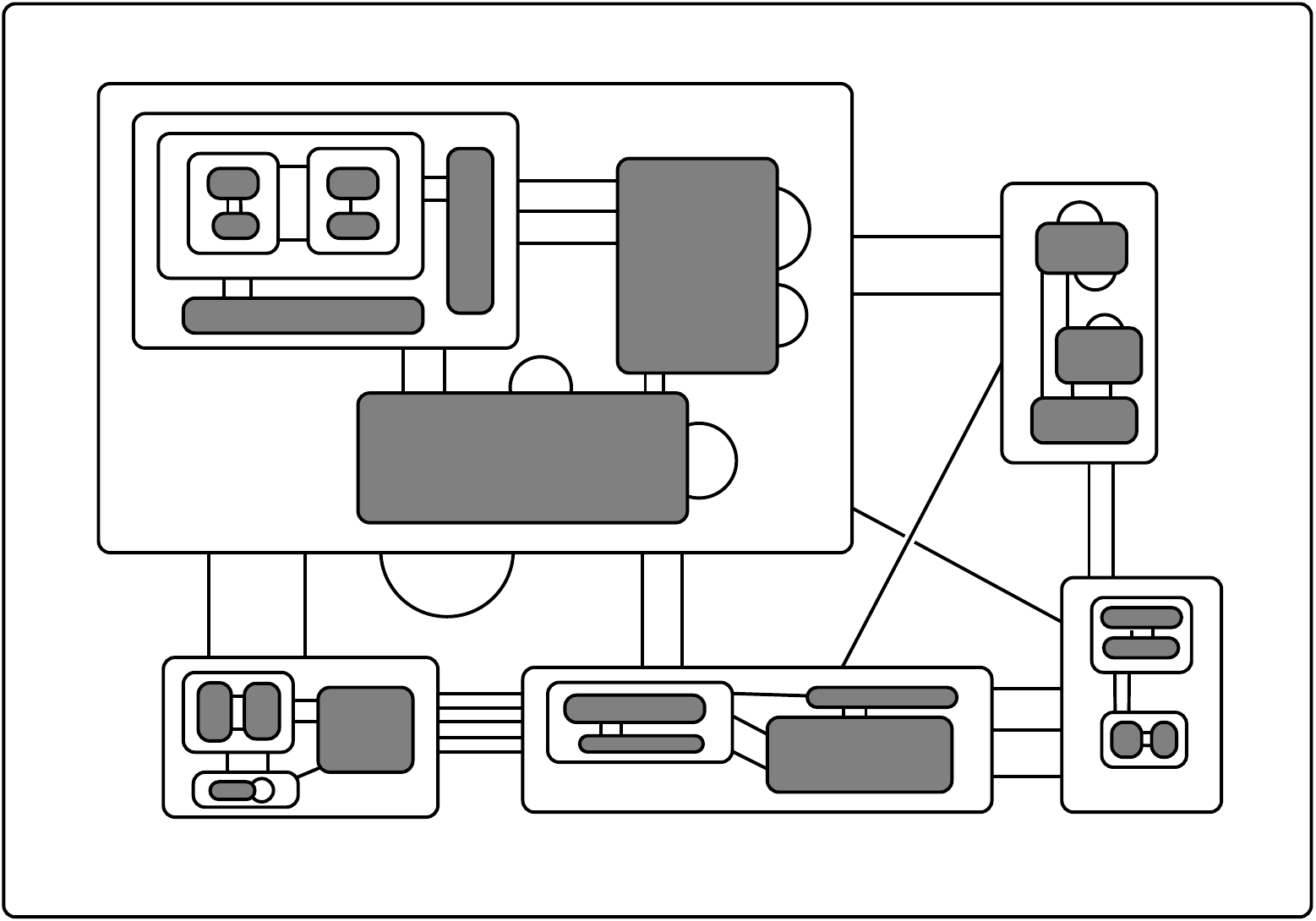}}
  \caption{Schematic picture of an $\HH$--structure. The outer box
    represents the top of the $\HH$--structure, and the nesting
    indicates the hierarchy. Lines connecting rounded boxes are
    edges of the graph of groups decomposition at that level. Shaded
    boxes are terminal groups in the structure, and are either slender
    or are equipped with an action of their associated groups on an
    $\HH$--complex.}
  \label{fig::hstructure}
\end{figure}

\subsection*{Resolving the action of $G$ on $T$}

In this section $\HH$ is assumed to be a slender hyperbolic hierarchy
of a finitely generated group. Let $X$ be a triangular complex with a
$G$ action and let $T_X$ be the cutpoint tree. Collapse edges with
non-small stabilizers to obtain $D_X$.

\begin{lemma}
  \label{lem::newhierarchy}
  Let $\mathcal{X}_G$ be an $\HH$--structure for $G\in\HH$. There are
  $\HH$--structures $\mathcal{X}_{G_v}$, $v\in\Delta_G$, such that
  \[\sum_{v\in\Delta_G}\covo(\mathcal{X}_{G_v})\leq \covo(\mathcal{X}_G)\] and
  if, for each terminal group $B$ of $\mathcal{X}_G$, $X_B$ is
  a point, then
  \[\height(\mathcal{X}_{G_v})\leq\height(\mathcal{X}_G)\]
\end{lemma}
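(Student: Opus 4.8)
The plan is to proceed by recursion down the hierarchy $\mathcal{X}_G$, reducing each step to a purely tree-theoretic operation via the product-of-trees machinery of \S\ref{sec::productsoftrees}. Write $T = T_{\Delta_G}$ for the tree of the decomposition we wish to push past. The principal difficulty is that the terminal groups of $\mathcal{X}_G$, which carry the $\HH$--complexes, need not act elliptically on $T$, so they are not yet visibly contained in vertex groups of $\Delta_G$. I would first dispose of this: for each terminal group $B$ of $\mathcal{X}_G$ acting on a nonelementary $X_B$, apply the resolution of \S\ref{sec::dpconstruction} to the restricted action of $B$ on $\hat T = T\cup\partial T$. Collapsing essential components of $\Lambda'$ and then the preimage $\varphi^{-1}(\partial T)$ (Lemmas~\ref{lem::simplyconnected} and~\ref{lem::collapsesmall}) replaces $B$ by a finite hierarchy whose terminal groups act elliptically on $T$ and whose edge groups are slender or small-and-parabolic. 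Because this construction only crushes tracks and boundary preimages, it never creates triangles, so the total number of triangle orbits does not increase; it is, however, the only source of new levels, which is exactly why the height bound is asserted only under the hypothesis that every $X_B$ is a point, in which case this step is vacuous.

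After this reduction every terminal group of the refined structure fixes a vertex of $T$, hence is conjugate into some vertex group $Z$ of $\Delta_G$. To assemble $\mathcal{X}_Z$ I would slice the hierarchy $\mathcal{X}_G$ by $T$ one level at a time, starting from the root. At an internal node $L'\in\mathcal{X}_G$ with decomposition $\Omega_{L'}$, let $T'$ be its tree and form the core $S\subset T\times T'$ of Theorem~\ref{lem::core}; since $L'$ and all its vertex and edge groups are finitely generated (being slender or $\HH$--elliptic) and point preimages of $\pi_{T}$ are connected, $S/L'$ is finite. For a vertex $\tilde v$ of $T$ with stabilizer a conjugate of $Z$, the fiber $\pi_{T}^{-1}(\tilde v)$ is a graph-of-groups decomposition of $Z\cap (L')^g$, and these fibers, read off over the successive levels of $\mathcal{X}_G$, are precisely the successive levels of $\mathcal{X}_Z$. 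The $\HH$--complexes attached to the terminal groups of $\mathcal{X}_G$ are inherited by whichever $Z$ their now-elliptic stabilizers fall into.

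For the complexity bound I would check that the slicing distributes triangle orbits rather than duplicating them: connectedness of the $\pi_{T}$-fibers (Theorem~\ref{lem::core}) guarantees that each orbit of triangles in an $X_B$ contributes to the complex of exactly one $Z$, giving $\sum_{Z}\covo(\mathcal{X}_Z)\le\covo(\mathcal{X}_G)$. For the height bound, under the hypothesis that each $X_B$ is a point no resolution is performed, so $\mathcal{X}_G$ is purely a finite hierarchy, every level of each $\mathcal{X}_Z$ arises from a single level of $\mathcal{X}_G$ via the core projection, and therefore $\height(\mathcal{X}_Z)\le\height(\mathcal{X}_G)$.

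The step I expect to be the main obstacle is verifying that the decompositions produced for each $Z$ genuinely constitute an $\HH$--structure of $Z$, that is, that their small but non-slender edge groups act parabolically in $T_{\Delta_Z}$, the tree relevant to $Z$, rather than merely in $T_{\Delta_G}$, the tree for which Lemma~\ref{lem::collapsesmall} provides parabolicity. Reconciling these two trees, by transporting the parabolicity furnished by the resolution along $T$ into the required statement about $T_{\Delta_Z}$, together with the bookkeeping needed to confirm that the sliced edge and vertex groups remain slender or $\HH$--elliptic, is the technical heart of the argument.
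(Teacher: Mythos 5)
Your proposal follows essentially the same route as the paper's proof: first resolve each nonelementary terminal complex against $T_{\Delta_G}$ using the construction of \S\ref{sec::dpconstruction} (Lemma~\ref{lem::collapsesmall}) so that terminal groups become elliptic, then slice the resulting hierarchy level by level with the core $S\subset T\times T'$ of Theorem~\ref{lem::core} and set $\mathcal{X}_Z=\Pi^{-1}(Z)$, with the covolume and height bounds justified exactly as you describe. The difficulty you flag at the end is real but is not treated as an obstacle in the paper, which simply declares $\Pi^{-1}(Z)$ to be ``almost'' an $\HH$--structure and handles only the terminal groups explicitly.
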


\begin{proof}
  If there are no nondegenerate terminal vertex groups set
  $\mathcal{Y}_G=\mathcal{X}_G$. Each terminal vertex group in the
  resulting decomposition is either $\HH$--elliptic or slender. Note
  that $\height(\mathcal{Y}_G)\leq\height(\mathcal{X}_G)$.

  Let $\mathcal{X}_G$ be an $\HH$--structure on $G$, and suppose $G$
  acts on a slender $G$--tree $T$ with quotient $\Delta_G$. Suppose
  $\mathcal{X}_G$ has a nondegenerate terminal group $B$ acting
  on an $\HH$--complex $X_B$. Let $X'$ be the complex associated to
  $B$ and $T$ provided by Lemma~\ref{lem::collapsesmall}. Let
  $\Omega_B$ be the graphs of groups decomposition $D_{X'}/B$, and for
  each vertex $w$ of $\Omega_B$ let $X_w$ be the subcomplex of $X'$
  stabilized by $B_w$. There is a natural $B_w$--map $X_w\to\hat T$
  obtained by restriction, with $X_w^*$ connected. Let $Y_w$ be the
  $\HH$--complex $(X_w)_T$. Now let $\Omega_{B_w}=D_{Y_w}/B_w$, and
  for each vertex $z$ of $D_{Y_w}/B_w$ let $(B_{w})_z$ act on the
  subcomplex of $Y_w$ corresponding to a lift of $z$. Repeat over all
  nondegenerate terminal groups $B$ of $G$ to obtain an
  $\HH$--structure $\mathcal{Y}_G$.

  Let ${G_v}$ be a vertex group of $\Delta_G$, and let
  $\mathcal{X}_{G_v}$ be the hierarchy of ${G_v}$ provided by
  Lemma~\ref{lem::newhstructure} applied to $\mathcal{Y}_G$ and
  $\Delta_G$.  Let $W$ be a terminal vertex group of
  $\mathcal{Y}_G$. By construction $W$ is elliptic in $\Delta_G$ and
  $\Delta_W$ is a finite tree representing the trivial graph of groups
  decomposition of $W$.  Suppose first that $W$ is nondegenerate. For
  each vertex group $V$ of $\Delta_W$ in $\mathcal{X}_{G_v}$, if $V$
  is slender let $X_V$ be a point, and if $V=W<W$ let $X_V=X_W$, the
  $\HH$--complex associated to $V$. If $W$ is degenerate then each
  vertex group $W$ of $\Delta_W$ is either $\HH$--elliptic or slender
  and in these cases let $X_W$ be a point.
\end{proof}

\subsection*{Hierarchy of $\HH$--structures}

Let $H$ be a finitely generated group, $\mathcal{H}$ a slender
hyperbolic hierarchy for $H$, and suppose $H$ is $\HH$--almost
finitely presented. Let $\mathcal{X}_H$ be the trivial
$\HH$--structure with trivial graph of groups decomposition, and let
$X_H$ be any $\HH$--complex for $H$. 

Suppose that $\mathcal{X}_L$ has been defined for $L\in\mathcal{H}.$
For $\Delta_L$ and $Z$ a vertex group of $\Delta_L$, let
$\mathcal{X}_Z$ be the $\HH$--structure on $Z$ constructed in the
previous subsection. Let $B_{L,1},\dotsc,B_{L,n_L}$ be the terminal
vertex groups acting on nondegenerate $\HH$--complexes
$X_{B_{L,i}}$. Let $L_1,\dotsc,L_k$ be the descendants of $L$. Then
each $X_{B_{{L_j},k}}$ is obtained from some $X_{B_{L,i(k)}}$ by
resolving the action of $B_{L,i(k)}$ on $T_{\Delta_L}$. We call the
collection of $B_{L_j,i(k)}$ such that $i(k)=i$ the \emph{descendants}
of $B_{L,i}$. Since $X_H$ has finitely many triangles, for all but
finitely many $L$, each $B_{L,i}$ has exactly one descendant
$B_{{L_j},i}$ and $\covo(X_{B_{L,i}})=\covo(X_{B_{{L_j},i}})$. We
have:

\begin{lemma}[{\emph{c.f.}~\cite[p.~627]{dp}}]
  \label{lem::onedescendant}
  \[
  \covo(X_{B_{L,i}})\geq \sum_{k\mid i(k)=i} \covo(X_{B_{{L_j},k}})
  \] 
  and for all but finitely many $L\in\HH$, the sum on the right is
  over one element and the inequality is an equality. There is some
  $N_{\mathrm{tri}}$ so that for $i\geq N_{\mathrm{tri}}$ this is the case.
\end{lemma}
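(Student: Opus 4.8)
The plan is to read the inequality off the complexity bound already recorded in Lemma~\ref{lem::newhierarchy}, localised to a single terminal complex, and then to extract the ``one descendant with equality'' clause from a bounded--integer monotonicity argument.

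First I would establish the inequality. The resolution of Lemma~\ref{lem::newhierarchy} processes each nonelementary terminal complex $X_{B_{L,i}}$ separately: one maps $X_{B_{L,i}}$ into $\hat T=T_{\Delta_L}\cup\partial T_{\Delta_L}$, collapses the components of $\varphi^{-1}(\partial T_{\Delta_L})$ to pass to $X'$, cuts $X'$ into the subcomplexes $X_v$ indexed by the vertices of $\Omega_B$, replaces each $X_v$ by the $\HH$--complex $(X_v)_T$, and finally cuts along the $\Omega_{B_v}$. Every descendant complex $X_{B_{L_j,k}}$ with $i(k)=i$ is one of the nonelementary pieces produced this way, so the construction is local to $X_{B_{L,i}}$. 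The point is that the whole procedure is non-increasing on orbits of triangles and, more precisely, that distinct descendant triangles have distinct ancestors in $X_{B_{L,i}}$: cutting $X'$ into the $X_v$ distributes its triangles disjointly, collapsing $\varphi^{-1}(\partial T_{\Delta_L})$ only destroys triangles, and collapsing the essential tracks followed by reducing creates no net new triangle (each collapsed corner yields a bigon that the reduction removes). Summing over $v$ and over the further cuts gives
\[
\covo(X_{B_{L,i}})\ \geq\ \sum_{k\mid i(k)=i}\covo(X_{B_{L_j,k}}),
\]
which is exactly the per--$B$ refinement of the aggregate bound $\sum_{Z}\covo(\mathcal{X}_Z)\leq\covo(\mathcal{X}_G)$.

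For the second statement I would set $C_n=\sum_{L\in\HH^n}\sum_i\covo(X_{B_{L,i}})$ and let $M_n$ be the number of nonelementary terminal complexes occurring at level $n$. Since a nonelementary complex contains at least one triangle, $M_n\leq C_n$, and since every nonelementary complex at level $n+1$ descends from exactly one at level $n$, summing the inequality above gives $C_{n+1}\leq C_n\leq C_0=\covo(X_H)$, a non-increasing sequence of non-negative integers. Writing $D_n=C_n-M_n\geq 0$, a node-by-node comparison shows that the contribution of $B_{L,i}$ to $D_n-D_{n+1}$ is $(b-1)+d$ when $b\geq 1$ and is $\covo(X_{B_{L,i}})-1$ when $b=0$, where $b$ is the number of descendants of $B_{L,i}$ and $d=\covo(X_{B_{L,i}})-\sum_{k\mid i(k)=i}\covo(X_{B_{L_j,k}})$ is its complexity loss; in every case this is non-negative, so $D_n$ is also non-increasing and hence eventually constant. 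Beyond the level where $D_n$ stabilises there is neither branching ($b\geq 2$) nor complexity loss, so $M_{n+1}=\sum b\leq M_n$; being a bounded sequence of non-negative integers, $M_n$ then stabilises too, at some level $N_0$. For $n\geq N_0$ there is no branching, no complexity loss and (since $M_n$ is constant while all $b\leq 1$) no termination, so each $B_{L,i}$ has exactly one descendant $B_{L_j,i}$ with $\covo(X_{B_{L,i}})=\covo(X_{B_{L_j,i}})$. As each $\Delta_L$ is a finite graph of groups decomposition, every $\HH^n$ is finite, so the $L$ at levels below $N_0$ form a finite set, giving the ``all but finitely many $L$'' clause.

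I expect the main obstacle to be the per--$B$ inequality, specifically the claim that the resolution creates no net new triangles and that distinct descendant triangles have distinct ancestors. This is precisely the geometric bookkeeping of \S\ref{sec::dpconstruction}: one must verify that collapsing $\varphi^{-1}(\partial T_{\Delta_L})$, collapsing the essential tracks, and reducing cannot increase $\covo$, even though an intermediate collapse of a track through a triangle can momentarily create extra $2$--cells. This is the step at which the fragile $b_1$ estimate of \cite[Lemma~4.10]{dp} is replaced by a robust count of triangles; once it is in hand, the stabilisation reduces to the elementary monotonicity of $C_n$, $D_n$, and $M_n$ above.
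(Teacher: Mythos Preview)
Your argument is correct and is a detailed expansion of the paper's own treatment, which gives no formal proof at all: the paper simply records, in the sentence immediately preceding the lemma, that ``since $X_H$ has finitely many triangles, for all but finitely many $L$, each $B_{L,i}$ has exactly one descendant $B_{L_j,i}$ and $\covo(X_{B_{L,i}})=\covo(X_{B_{L_j,i}})$'', and then states the lemma with a reference to~\cite{dp}. Your $C_n$, $D_n=C_n-M_n$, $M_n$ bookkeeping is exactly the right way to make this precise, and the contribution analysis is correct.

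One small point: your final sentence invokes finiteness of each $\Delta_L$ to conclude that $\HH^n$ is finite, but the paper never imposes this hypothesis on the input hierarchy, and you do not need it. Your own argument already gives $M_n\leq C_n\leq\covo(X_H)$, so at each level at most $\covo(X_H)$ of the groups $L$ carry a nonelementary terminal complex; for all other $L$ the statement is vacuous. Hence the ``bad'' $L$ (those at levels below $N_0$ with some $B_{L,i}$ failing the one-descendant-with-equality condition) number at most $N_0\cdot\covo(X_H)$, which already yields the ``all but finitely many $L$'' clause without any finiteness assumption on $\HH^n$.
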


Henceforth $i\geq N_{\mathrm{tri}}$.

\section{Nondegenerate complexes converge to trees}
\label{sec::stabilizing}

The aim of this section is to replace, for groups sufficiently far
down the hierarchy $\HH$, each $\HH$--structure $\mathcal{X}_L$ with
an $\HH$--structure with no triangles. This, along with the fact
that in this case the depth of the
$\HH$--structures is non-increasing (Lemma~\ref{lem::newhierarchy}),
will complete the proof of Theorem~\ref{thm::maintheorem}.

Consider the finite collection of infinite sequences of terminal
vertex groups
\[\{G^p_{N_{\mathrm{tri}}}>G^p_{N_{\mathrm{tri}}+1}>G^p_{N_{\mathrm{tri}}+2}>\dotsc\}\]
such that $G^p_i\in\mathcal{X}_{L(p,i)}$, $L(p,i)\in\HH^i$, is
terminal, acts on a nondegenerate $\HH$--complex $X^p_{G^p_i}$, is the
only descendant of $G^p_{i-1}$, such that
$\covo(X^p_{G^p_i})=\covo(X^p_{G^p_{i+1}})$. To simplify notation we
drop the `$p$'s and denote $G^p_i$ by $G_i$ and $X^p_{G^p_i}$ by
$X_i$. See Figure~\ref{fig::theprocedure}. 

Note that if $v$ is a vertex in $X_i$ with non $\HH$-elliptic
stabilizer then the stabilizer of $v$ is slender, hence all
stabilizers of connected components of the link of $v$ are slender
and, following the steps in the construction of resolving complexes,
$v$ is not a cutpoint of $X_i$, hence the link $l$ of $v$ has exactly
one connected component and
$\stab(l)=\stab(v)$. 
\begin{figure}
\labellist
\pinlabel $X_i{\xrightarrow{\rho_i}}{\hat T_i}$ [l] at 223 347
\pinlabel $X_i^*{\xrightarrow{\rho_i}}T_i$ [l] at 223 210
\pinlabel $\cup$ at 164 281
\pinlabel $\varphi_i$ [l] at 217 134
\pinlabel $(X_i)_{T_i}$ [tl] at 199 61
\pinlabel $X_{i+1}{\xrightarrow{\rho_{i+1}}}{\hat T_{i+1}}$ [l] at 596 347
\pinlabel $X_{i+1}^*{\xrightarrow{\rho_{i+1}}}T_{i+1}$ [l] at 596 210
\pinlabel $\cup$ at 537 281
\pinlabel $\varphi_{i+1}$ [l] at 593 134
\pinlabel $(X_{i+1})_{T_{i+1}}$ [tl] at 575 61
\endlabellist
\centerline{\includegraphics[width=.9\textwidth]{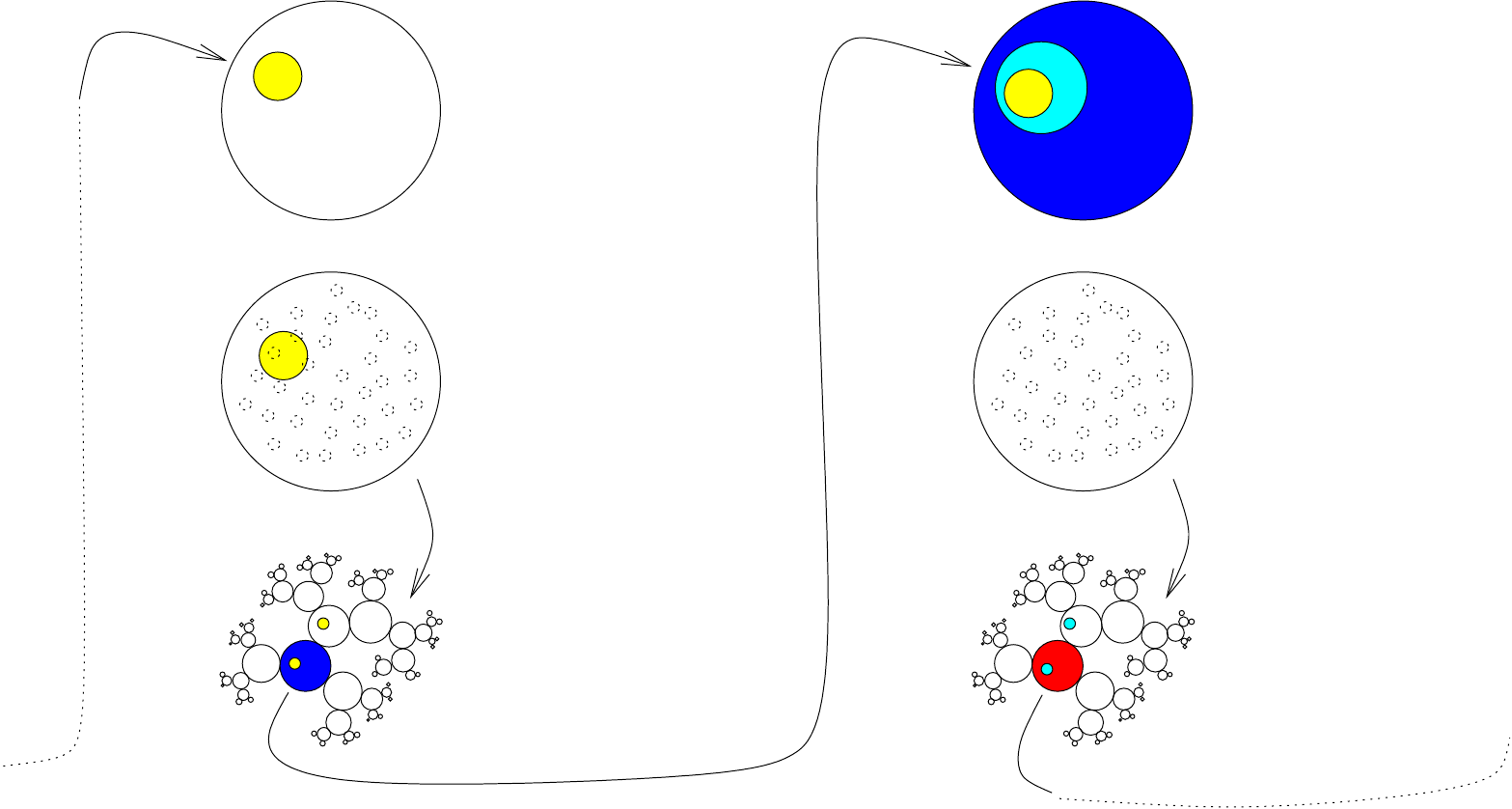}}
\caption{Sufficiently far down $\HH$ the descendant $X_{i+1}$ of $X_i$
  is a non $\HH$-elliptic cutpoint-free component of $(X_i)_{T_i}$
  constructed from a type 1 resolution.}
\label{fig::theprocedure}
\end{figure}



Let $\Delta_i$ be the decomposition $G_i$ inherits from
$\Delta_{L(p,i)}$, and let $T_i$ be the associated tree. We now argue
that for sufficiently large $i$, $X_i$ can be replaced by a graph of
groups such that each vertex group acts on a tree with $\HH$--elliptic
or slender vertex stabilizers.

Let $\mathcal{L}_i$ be the collection of orbits of connected
components of links of vertices of $X_i$, and denote the orbit of a
link $l$ by $\left[l\right]$. Say that
$\left[l\right]\in\mathcal{L}_i$ \emph{dies} in $X_{i+1}$ 
if $\stab(l)$ acts hyperbolically in $T_i$, otherwise $\left[l\right]$
\emph{survives}. Let $\mathcal{L}^s_i$ be the collection of orbits of
connected components of links of vertices which survive, and let
$\mathcal{I}_i$ be the collection of orbits of links of vertices which
survive forever. There is a natural map
$\iota_i\colon\mathcal{I}_i\to\mathcal{I}_{i+1}$, and since
$\covo(X_{i+1})\leq\covo(X_{i})\leq\covo(X_H)$, eventually $\iota_i$
is bijective. The links which survive forever have $\HH$--elliptic
stabilizers, and if $\stab(l)$ is $\HH$--elliptic then $l$ is
\emph{$\HH$--elliptic}.

Let $\mathcal{V}_i$ be the collection of orbits of vertices $v$ such
that $v$ has an $\HH$--elliptic component in its link.  Since the
number of orbits of connected components of links which survive
forever is constant, $\vert\mathcal{V}_i\vert$ is non-decreasing in
$i$, and is eventually constant. Furthermore, if a component of the
link of $v$ is not slender then \emph{all} components of the link of
$v$ have non-slender stabilizer, and if the link of $v$ has an
$\HH$--elliptic component $l$ and $\stab(l)$ is slender then $l$ is
the only component of $\link(v)$.

Let $l$ be an $\HH$--elliptic component of a link. Then \[\vert
l/\stab(l)\vert\geq
\vert l'/\stab(l')\vert,\,\,\,\,\,\, l'\in\iota_i(\left[l\right])\]
For sufficiently large $i$ this number stabilizes as well, giving a
bijection $\mathcal{V}_i\to\mathcal{V}_{i+1}$ and, for each vertex $v$
with an $\HH$--elliptic link component, a $\stab(v)$--equivariant
isomorphism of links $\link_{X_i}(v)\to\link_{X_{i+1}}(\varphi_i(v))$
($\varphi_i$ is shown in Figure \ref{fig::theprocedure}.)

We assume below that $N_{\mathrm{link}}\geq N_{\mathrm{tri}}$ has been
chosen large enough to arrange all of the above, over all sequences
$\{G^p_i\}$, for $i\geq N_{\mathrm{link}}$.

For $i\geq N_{\mathrm{link}}$, $X^*_i$ is connected and
$\varphi_i\colon X^*_i\to (X_i)_{T_i}$ induces bijections on orbits of
triangles and stars of vertices with $\HH$--elliptic components in
their links.

\subsection*{Finding $\HH$--elliptic subgroups}

Let $\triangles(X_i)$ be the set of triangles in $X_i$, and let
$\tau_i\colon\triangles(X_i)\to\triangles((X_{i})_{T_i})$ be the
induced map. A pair of triangles is an unordered pair of triangles
$(t,t')$ where $t,t'\in\triangles(X_i)$ overlap in an edge. Denote the
collection of orbits of pairs of triangles in $X_i$ by
$\tripair(X_i)$.

The map $\varphi_i$ \emph{separates} a class of pairs
$P=\left[(t,t')\right]\in\tripair(X_i)/G_i$ if $\tau_i(t)$ and
$\tau_i(t')$ lie in different cutpoint free components of
$(X_{i})_{T_i}$. See Figure~\ref{fig::separated}. If $\varphi_i$
doesn't separate $P$ then it descends to an element
\[
\tau_i(P)=\left[(\tau_i(t),\tau_i(t'))\right]\in\tripair(X_{i+1})
\] 
Similarly, $\varphi_{i+1}$ \emph{doesn't separate} $P$ if $\varphi_i$
doesn't separate $P$ and $\varphi_{i+1}$ doesn't separate
$\tau_i(P)$. Likewise for $\varphi_k$ for $k=i+2,\dotsc$.

\begin{definition}[Stable pairs of triangles]
  Let $\spairs(X_i)$ be the collection of equivalence classes of pairs
  of triangles which are not eventually separated by any $\varphi_j$,
  $j\geq i$. Elements of $\spairs(X_i)$ are \emph{stable pairs}.
\end{definition}

There are induced (injective) maps 
\[
\sigma_{i,j}\colon\spairs(X_i)\to\spairs(X_j)\] The purpose of this
section is to show that the sequence
\begin{equation}\label{eq::sigma-sequence} 
\dotsb\to\spairs(X_i)\xrightarrow{\sigma_{i,i+1}}\spairs(X_{i+1})\to\dotsb
\end{equation}
eventually stabilizes.

\begin{figure}[ht]
\centerline{\includegraphics[width=.8\textwidth]{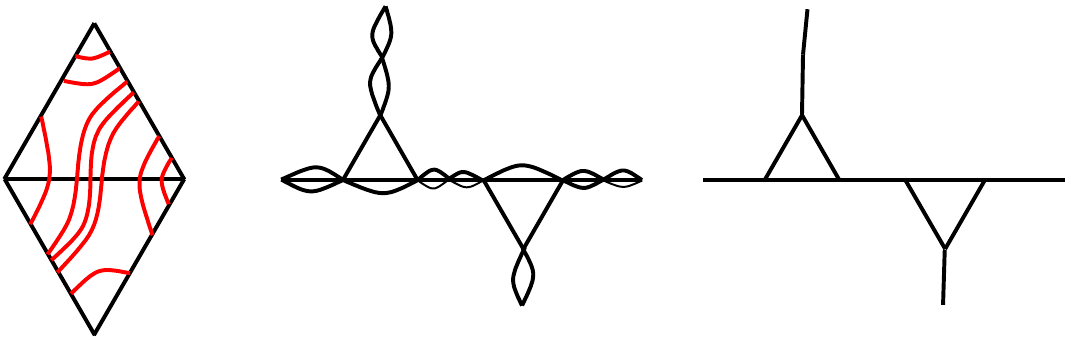}}
\caption{An adjacent pair of triangles in $X_i$ separated by
  $\varphi_i$.}
\label{fig::separated}
\end{figure}



Let $\sim_i$ be the equivalence relation on $\triangles(X_i)$
generated by $s\sim_i t$ if $\left[(s,t)\right]\in\spairs(X_i)$.  Let
$\{P_{\alpha}\}$ be the collection of subcomplexes, each of which is
the union of elements in a $\sim_i$ equivalence class, and let
$P_i^1,\dotsc,P^{n_i}_i$ be a set of representatives of orbits under
the action of $G_i$. Then $\cup_j(G_i/\stab(P_i^j))P_i^j$ contains all
triangles in $X_i$, and 
if
$gP_i^j$ and $hP_i^{j'}$ overlap in a triangle then $j=j'$ and
$h^{-1}g\in\stab(P_i^{j})$.

Each $P_i^j\subset X_i$ pushes forward under $\varphi_i$ to a
subcomplex $\varphi_i(P_i^j)$ of $(X_{i+1})_{T_i}$ and there is an
element $h_{i,j}\in G_i$ such that $h_{i,j}\varphi_i(P_i^{j'})
\subset P_{i+1}^j$. Abusing notation, we will suppress mentioning the
elements $h_{i,j}$ and simply say that $P_i^j$ pushes forward to a
subcomplex of $P_{i+1}^{j'}$. Similarly for the stabilizers of the
$P_i^j$: $h_{i,j}\stab(P_i^j)h_{i,j}^{-1}\leq\stab(P_{i+1}^{j'})$, but
we will drop the $h_{i,j}$ and simply say that
$\stab(P_i^j)\leq\stab(P_{i+1}^{j'})$.

Since every triangle in $X_{i+1}$ is contained in some $P_{i+1}^{j'}$
and $n_i\geq n_{i+1}\geq 1$, we can assume from now on that $i$ is
chosen sufficiently large so that $n_i=n_{i+1}$, and that $P_i^j$
pushes forward to a subcomplex of $P_{i+1}^j$. Let $E_i^j$ be the
number of orbits of edges in $P_i^j$ under the action of
$\stab(P_i^j)$. Then $E_i^j\geq E_{i+1}^j$. Since the number of orbits
of edges is bounded from above by $3\covo(X_i)$ this quantity is
nonincreasing as well. Choose $N_{\mathrm{edges}}\geq
N_{\mathrm{link}}$ sufficiently large so that $E_i^j=E_{i+1}^j$ for
$i\geq N_{\mathrm{edges}}$.

\begin{lemma}
  \label{lem::increasestabilizer}
  If $\sigma_{i,i+1}$ in (\ref{eq::sigma-sequence}) is not
  bijective, 
  $i>N_{\mathrm{edges}},$ then there is $j$ and an edge $e$ in $P_i^j$
  such that
  \[
  \stab^+_{P_i^j}(e)\lneq\stab^+_{P_{i+1}^j}(e)
  \]
  Furthermore $\stab^+_{P_i^j}(e)$ is conjugate into a
  non-$\HH$--elliptic edge group of $T_i/G_i$.
\end{lemma}

\begin{proof}
  If $\spairs(X_i)\into\spairs(X_{i+1})$ is not surjective there are
  triangles $t\subset P_i^j$ and $t'\subset gP_{i}^{j'}$ 
  and edges $e\subset t$, $g\cdot e\subset t'$ with $g\in
  G_i\setminus\stab(P_i^j)$ such that
  $\left[(t,t')\right]\not\in\spairs(X_i)$ but
  $\left[\tau_i(t),\tau_i(t')\right]\in\spairs(X_{i+1})$. Since $g$
  doesn't stabilize $P_i^j$, clearly
  $\stab^+_{P_i^j}(e)\lneq\stab^+_{P_{i+1}^j}(e) \ni g$.

  Since $t$ and $t'$ don't form a stable pair but their push-forwards
  do, there is a component $\lambda$ of $\Lambda$ that meets both $t$
  and $t'$ in the edges $e, g\cdot e$ respectively. Then
  $\stab^+_{P_i^j}(e)\leq\stab(\lambda)$ 
  and $\stab(\lambda)$ is conjugate into an edge group of
  $\Delta_i$. Since $i\geq N_{\mathrm{link}}$ no component of the link
  of the vertex of $X_{i+1}$ corresponding to $\lambda$ is
  $\HH$--elliptic, otherwise a new equivalence class of
  $\HH$--elliptic link stabilizers would have to have appeared,
  contradicting the fact that the map
  $\iota_i\colon\mathcal{I}_i\to\mathcal{I}_{i+1}$ is a bijection for
  $i>N_{\mathrm{link}}$.
\end{proof}

Since $\HH$ satisfies the acc (Definition~\ref{def::acc}.) on
$\mathcal{C}_{\HH}$, there is some first
index $M_p\geq N_{\mathrm{edges}}$ (recall we are working in the
branch $G^p_i=G_i$) such that for every
edge $e \subset P^j_i$, $\stab^+_{P_i^j}(e)=\stab^+_{P_{i+1}^j}(e)$
 for $i\geq M_p$, hence $\sim_i=\sim_{i+1}$ for $i\geq M_p$
by Lemma~\ref{lem::increasestabilizer}.

\subsection*{Proof of Theorem~\ref{thm::maintheorem}}

\begin{definition}
  \label{def::unstableedge}
  An \emph{unstable} edge is an edge $e$ such that there are triangles
  $t,t'$ with $t\cap t'=e$ but
  $\left[(t,t')\right]\not\in\spairs(X_i)$. Let $W\subset X_i$ be the
  union of unstable edges in $X_i$.
\end{definition}

A \emph{cone} $C$ is a triangulated disk with exactly one interior
vertex. A cone in a triangular complex $X$ is a combinatorial map
$\gamma\colon C\to X$ which maps triangles to triangles. A cone
$\gamma\colon C\to X$ is \emph{simple} if the associated path in the
link of the image of the cone point is simple.

Let $C\to X$ be a cone in $X$ and let $C^*$ be the space obtained by
removing vertices of $C$ which are mapped to $\rho^{-1}(\partial
T)$. Let $\Lambda$ denote also the preimage of $\Lambda$ in $C^*$. The
map $C^*\to X^*/\Lambda$ induces maps $C^*/\Lambda\to X^*/\Lambda =
X_T$, 
where $C*/\Lambda$ is the space obtained by collapsing each connected
component of $\Lambda$ in $C$ to a point, followed by collapsing
bigons to edges, i.e. reducing.
Let $c$ be the cone point in $C$, and let $s$ be the outermost
component of $\Lambda$ encircling $c$, if there is one, otherwise let
$s=c$. The \emph{push-forward} $C'$ of $C$ to $X_T$ is the cone
obtained from $C^*/\Lambda$ by taking all triangles in $C^*/\Lambda$
containing the image of $s$. See Figure~\ref{fig::conesfromcones}.

\begin{figure}[ht]
\labellist
\pinlabel $C$ [bl] at 76 86 
\pinlabel $C^*/\Lambda$ [b] at 150 76
\pinlabel $C'$ [b] at 225 75
\pinlabel $s$ [t] at 57 2
\pinlabel $t$ [b] at 1 63
\pinlabel $t'$ [tr] at 15 84
\pinlabel $\leadsto$ at 105 48
\pinlabel $\leadsto$ at 185 48
\endlabellist
\centerline{\includegraphics{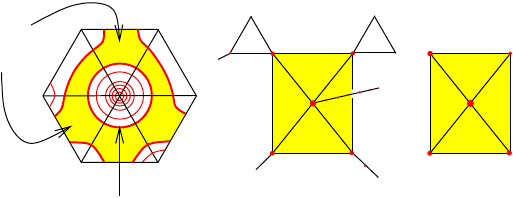}}
\caption{Constructing the push-forward $C'$ of $C$. In this example
  the triangles $t$ and $t'$ are not adjacent, but have adjacent
  push-forwards.}
\label{fig::conesfromcones}
\end{figure}

\begin{lemma}
  \label{lem::conelemma}
  Suppose $C_i\to X_i$ is a simple cone, and that there are two 
  triangles $t$ and $t'$ in the image of $C_i$ such that $t \not\sim_i
  t'$, then $i<M_p$.
\end{lemma}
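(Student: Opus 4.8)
The plan is to prove the contrapositive: assuming $i\geq M_p$, I show that every simple cone $C_i\to X_i$ has all of its triangles in a single $\sim_i$--class, so that no pair $t\not\sim_i t'$ in the cone can occur. The triangles of the cone are cyclically ordered around the cone point, consecutive triangles meeting along an edge through that vertex, so by Lemma~\ref{lem::connectingchains} it suffices to show that every consecutive pair of cone triangles becomes a stable pair after passing far enough down the sequence $\{G_i\}$.

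First I would push the cone forward along the resolutions, using the construction immediately preceding the statement, to obtain a sequence of simple cones $C_i\leadsto C_{i+1}\leadsto\dotsb$, one in each $X_j$. Since $i\geq N_1$ the maps $\varphi_j$ are bijective on orbits of triangles and of stars of vertices with $\HH$--elliptic components in their links, so the triangles of the cone persist at every level. The geometric heart of the argument is the observation that in a \emph{simple} cone a track can separate two consecutive triangles only by encircling the cone point: the arc of the pattern inside each triangle incident to the cone point separates that vertex from the opposite edge, and simplicity of the associated path in the link forces these arcs to close up into a single loop around the cone point. Collapsing the outermost such loop is precisely the push-forward operation, after which the affected triangles are again adjacent across a new edge through the new cone point.

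Next I would run this to its end. Each push-forward which collapses an essential encircling loop strictly decreases the number of triangles of the cone, and that number is bounded above by $\covo(X_H)$; hence after finitely many steps, at some level $j$, the cone stabilizes and no essential track encircles its cone point. By the simplicity observation no consecutive pair of triangles of this stabilized cone is separated by any $\varphi_k$, $k\geq j$, so every consecutive pair lies in $\spairs(X_j)$ and, by Lemma~\ref{lem::connectingchains}, all triangles of the stabilized cone lie in one $\sim_j$--class. In particular the images of $t$ and $t'$ satisfy $t\sim_j t'$.

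Finally I would return to the complexity bookkeeping. Because $t\not\sim_i t'$, their images are not related by the pushed-forward relation $\sigma_{i,j}(\sim_i)$, while they are related by $\sim_j$; thus $\sigma_{i,j}(\sim_i)$ is strictly finer than $\sim_j$, and by the lemma comparing $D_i$ with $D_j$ we obtain $D_i>D_j$. Since $D$ is constant on all indices $\geq M_p$, this forces $i<M_p$, as required. I expect the main obstacle to be the middle step: for a genuinely immersed (not embedded) simple cone one must argue carefully that separation of consecutive triangles is realized by an \emph{essential} loop encircling the cone point, that the push-forward of cones faithfully models the collapse of that loop, and that the triangles $t$ and $t'$ are not lost when the inner layers of the cone are discarded, so that the purely local behaviour inside the cone is correctly reflected in the global complexity $D$.
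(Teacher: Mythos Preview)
Your setup---contrapositive, push the cone forward, and wait until the size of $C_j$ stabilizes so that every consecutive pair in $C_j$ is a stable pair---is exactly the paper's. The divergence, and the genuine gap, is in the final step.

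You try to extract $D_i>D_j$ by arguing that the images of $t$ and $t'$ are $\sim_j$--related. But the only thing the stabilization gives you is that the triangles \emph{remaining in the stabilized cone $C_j$} lie in a single $\sim_j$--class. Since $\vert C_j\vert<\vert C_i\vert$, the images of $t$ and $t'$ need not be among them; your own caveat (``that the triangles $t$ and $t'$ are not lost when the inner layers of the cone are discarded'') is exactly the obstruction, and it is not cosmetic. Nothing in the argument forces a $\sim_j$--relation between two triangles that have both been peeled away from the cone, so the inequality $D_i>D_j$ is not established.

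The paper closes the argument the other way around: instead of pushing $t,t'$ forward, it pulls the stable pairs of $C_j$ back. Since $i\geq M_p$, each stable pair $[(t_k,t_{k+1},e_k)]\in\spairs(X_j)/G_j$ is the image of some $[(\tilde t_k,\tilde t_{k+1},\tilde e_k)]\in\spairs(X_i)/G_i$, where $\tilde t_k$ are the triangles of $C_i$ corresponding to those of $C_j$. Thus a \emph{proper} cyclic subsequence of the triangles of $C_i$ already share edges pairwise in $X_i$; this forces the associated path in the link of the cone point to revisit a vertex, contradicting simplicity of $C_i$ directly. The contradiction is with the hypothesis ``$C_i$ is simple'', not with constancy of $D$, and it makes no reference to the particular witnesses $t,t'$.
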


\begin{proof}
  Suppose $i\geq M_p$. Let $C_j\to X_j$ be the push-forward of $C_i$
  to $X_j$. Since there are triangles $t$ and $t'$ such that
  $t\not\sim_i t'$ then for some $j>i$, $\vert C_j\vert <\vert
  C_i\vert$, otherwise each pair of adjacent triangles in $C_i$ is a
  stable pair. Let $j$ be the first index such that $\vert
  C_j\vert=\vert C_{j'}\vert$ for $j'\geq j$. Then all triangles in
  the image of $C_j$ are $\sim_j$ equivalent. Let $t_1,\dotsc,t_n$ be
  the triangles in the image of $C_j$, indexed so that
  $\left[(t_k,t_{k+1})\right]\in\spairs(X_j)/G_j$. Let $\tilde t_k$ be
  the triangle in the image of $C_i$ in $X_i$ corresponding to
  $t_k$. Then since $i\geq M_p$ there are edges $\tilde e_k$ in $X_i$
  such that $\tilde t_k\cap \tilde t_{k+1}=\tilde e_k$, hence
  $\left[(\tilde t_k,\tilde t_{k+1})\right]\in\spairs(X_i)/G_i$, but
  this implies that the cone $C_i\to X_i$ was not simple.
\end{proof}

\begin{lemma}
  \label{lem::separatinglemma}
  Suppose $i>M_p$, that $t,t'\in\triangles(X_i)$ intersect in an edge
  $e$, and that $\left[(t,t')\right]\not\in\spairs(X_i)$. Then $e$
  separates $X_i$, with $t\setminus e$ and $t'\setminus e$ lying in
  different components of $X_i\setminus e$.
\end{lemma}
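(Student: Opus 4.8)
The plan is to argue by contradiction, feeding a \emph{simple cone} into Lemma~\ref{lem::conelemma}. The first observation is that instability of $e$ already forces $t\not\sim_i t'$: if instead $t\sim_i t'$, there would be a chain of stable pairs joining $t$ to $t'$, and since $t$ and $t'$ share the edge $e$, the second assertion of Lemma~\ref{lem::connectingchains} would put $\left[(t,t',e)\right]\in\spairs(X_i)$, contradicting the hypothesis that $e$ is unstable. (Passing between the quotient relation $\sim_i$ and honest chains of pairs in $X_i$ is a routine equivariance bookkeeping, choosing translates so the chain begins at $t$.) So we may assume throughout that $t\not\sim_i t'$, and it suffices to exhibit a simple cone whose image contains both $t$ and $t'$: Lemma~\ref{lem::conelemma} will then give $i<M_p$, against $i>M_p$.

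Write $e=\left[v,w\right]$, $t=\left[v,w,x\right]$, $t'=\left[v,w,y\right]$, and suppose for contradiction that $e$ does \emph{not} separate $t$ from $t'$, so that $t\setminus e$ and $t'\setminus e$ lie in one component of $X_i\setminus e$. Then there is an edge path $P$ in the one-skeleton running from $x$ to $y$ and avoiding the edge $e$ (and its endpoints), and the concatenation $\ell=\left[v,x\right]\cdot P\cdot\left[y,v\right]$ is a loop based at $v$ that never traverses $e$. A simple cone at $v$ containing $t$ and $t'$ is exactly a simple cycle in $\link_{X_i}(v)$ through the two edges dual to $t$ and $t'$; these edges join the vertex $\bar e$ dual to $e$ to the vertices dual to $\left[v,x\right]$ and $\left[v,y\right]$, so producing such a cone amounts to connecting the latter two vertices by a simple path in $\link_{X_i}(v)\setminus\{\bar e\}$.

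To manufacture that local path I would use $H^1(X_i,\zee_2)=0$, which for these complexes also gives $H_1(X_i,\zee_2)=0$. The loop $\ell$ is a $\zee_2$--cycle, hence bounds a $\zee_2$--chain $D$ of triangles; reading off the triangles of $D$ incident to $v$ yields a closed edge path in $\link_{X_i}(v)$ through the vertices dual to $\left[v,x\right]$ and $\left[v,y\right]$ and missing $\bar e$ (since $\ell$ avoids $e$). Using that, after the reductions of \S\ref{sec::dpconstruction}, $X_i$ has no bigons and vertices with slender stabilizer have connected link, I would then excise backtracks and inessential subloops to reduce this to a genuine simple cycle through the two required vertices, i.e.\ a simple cone $C_i\to X_i$ with $t,t'$ in its image.

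With a simple cone containing the non-equivalent pair $t\not\sim_i t'$, Lemma~\ref{lem::conelemma} forces $i<M_p$, contradicting $i>M_p$; hence no connecting path $P$ exists and $e$ separates $X_i$ with $t\setminus e$ and $t'\setminus e$ on opposite sides. I expect the main obstacle to be precisely the third step: converting the homological filling $D$ of $\ell$ into an honest \emph{simple} cone at $v$ (or at $w$) rather than a degenerate or self-overlapping fan, while ensuring it still contains both $t$ and $t'$. This is where the absence of bigons, the connectivity of links at slender vertices, and careful control of how $\ell$ and $D$ meet $e$ must all be combined; the global-to-local passage (from a path in $X_i\setminus e$ to a path in $\link_{X_i}(v)\setminus\{\bar e\}$) is the delicate point.
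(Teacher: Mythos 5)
Your setup is right---reducing to $t\not\sim_i t'$ via Lemma~\ref{lem::connectingchains}, filling a loop using $H^1(X_i,\zee_2)=0$, and aiming to contradict Lemma~\ref{lem::conelemma} is the paper's strategy too---but the step you flag as ``delicate'' is where the argument actually fails, not merely where it gets technical. The claim that the filling $D$ of $\ell$ yields a closed path in $\link_{X_i}(v)$ missing the vertex $\bar e$ ``since $\ell$ avoids $e$'' is false: $\ell$ avoiding $e$ only says that $e$ does not occur in $\partial D$, i.e.\ that $e$ occurs an even number of times among the edges of $D$; the filling will in general contain interior edges mapped to $e$, and then the link path at $v$ passes through $\bar e$. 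Relatedly, non-separation of $e$ in $X_i$ is a global statement and does not imply that the vertices of $\link_{X_i}(v)$ dual to $[v,x]$ and $[v,y]$ lie in the same component of $\link_{X_i}(v)\setminus\{\bar e\}$, so a single simple cone at $v$ (or at $w$) containing both $t$ and $t'$ need not exist at all. (A smaller point: for ``reading off the triangles of $D$ incident to $v$'' to give an edge path in the link, $D$ must be a combinatorial surface, not just a $\zee_2$--chain; the paper works with a triangulated surface realizing the homology.)

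The paper's proof confronts exactly the difficulty you set aside: after normalizing $D$ so that the edges of $D$ mapped to $e$ do not separate $D$ (Figure~\ref{fig::homotopyfigure}), those $e$--edges cut $D$ into regions, producing a chain of triangles $t=t_0,t_1,\dotsc,t_n=t'$ all containing $e$; each region, after identifying its two $e$--edges, gives a cone in the link of $v$ \emph{or} of $w$ containing $t_j$ and $t_{j+1}$. Lemma~\ref{lem::conelemma} (for $i> M_p$) then forces $t_j\sim_i t_{j+1}$ for each $j$, and transitivity together with Lemma~\ref{lem::connectingchains} puts $\left[(t,t',e)\right]$ in $\spairs(X_i)$, the desired contradiction. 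So the conclusion is reached through a chain of simple cones and the generated equivalence relation, never through one simple cone containing both $t$ and $t'$. Replacing your single-cone step by this cutting-along-$e$ argument turns your sketch into the paper's proof.
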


Note that there may be edges which are not unstable, but which
still separate $X_i$.

\begin{figure}[ht]
\labellist
\pinlabel $t$ at 21 73
\pinlabel $t'$ at 28 49
\pinlabel $t_1$ at 64 98
\pinlabel $t_2$ at 127 131
\pinlabel $q_0$ [br] at 25 112
\pinlabel $q_1$ [br] at 82 135
\pinlabel $q_2$ [l] at 168 132
\pinlabel $q_{n-1}$ [tl] at 70 36
\pinlabel $a$ [br] at 10 94
\pinlabel $b$ [tr] at 33 29
\pinlabel $u$ [r] at 1 52
\pinlabel $v$ [tl] at 52 69
\pinlabel $f$ at 21 87
\pinlabel $g$ at 42 47
\endlabellist
\centerline{\includegraphics[width=.75\textwidth]{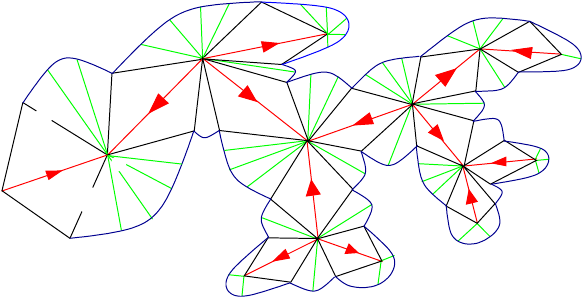}}
\caption{The homology $h$ from Lemma~\ref{lem::separatinglemma}, which
  we may assume is a disk. The edges with arrows are mapped to $e$.}
\label{fig::homotopyfigure}
\end{figure}

\begin{proof}[Proof of Lemma~\ref{lem::separatinglemma}.]
  Let $a$ and $b$ be the vertices of $t$ and $t'$, respectively,
  distinct from the endpoints $v$ and $w$ of $e$. Suppose that $e$
  doesn't separate $X_i$ into at least two components, with
  $t\setminus e$ lying in one and $t'\setminus e$ lying in
  another. Then there is an edge path $q\colon I\to X_i$ of a
  subdivided interval such that $q(0)=a$ and $q(1)=b$ such that
  $q^{-1}(e)=\emptyset$. Let $f$ and $g$ be the oriented edges of $t$
  and $t'$ connecting $a$ to $v$ and $v$ to $b$, respectively. Let
  $h'\colon D'\to X_i$ be a combinatorial map of a triangulated
  surface $D'$ representing a homology between the edge paths $gf$ and
  $q$, and let $h\colon D\to X_i$ be the combinatorial map of a
  surface obtained by attaching two triangles representing $t\cup_e
  t'$ to $D'$. See Figure~\ref{fig::homotopyfigure}.

  Without loss, by perhaps changing $q$ and $h$, we may assume that
  the union of edges of $D$ which are mapped to $e$ does not separate
  $D$, and that $D$ is a disk, as illustrated in
  Figure~\ref{fig::homotopyfigure}. The path $q$ may be divided into
  subpaths $q_0,\dotsc,q_{n-1}$ such that $q_j$ connects the apex of a
  triangle $t_j$ to the apex of a triangle $t_{j+1}$, and such that
  the side of $t_j$ is mapped to $e$ by $h$. Furthermore, by
  identifying the edges labeled $e$ in the sequence of triangles
  determined by $t_j$, $q_j$, and $t_{j+1}$, we obtain a cone in the
  link of one of $v$ or $w$. Then either $t_j=t_{j+1}$ or there is a
  simple cone $C\to X_i$ containing $t_j$ and $t_{j+1}$, hence by
  Lemma~\ref{lem::conelemma} $t_j\sim_it_{j+1}$ for all $j$, therefore
  $t\sim_i t'$, contrary to hypothesis.
\end{proof}





\begin{proof}[Proof of Theorem~\ref{thm::maintheorem}]
  Fix some $N_{\mathrm{equiv}}>\max_p\{M_p\}$, and let
  $\{U^p_{\alpha}\}$ be the collection of maximal connected
  subcomplexes of $X^p_{N_{\mathrm{equiv}}}$ which aren't eventually
  separated by any $\varphi^p_j$, $j>{N_{\mathrm{equiv}}}$. Each
  $U^p_{\alpha}$ is a union of $\sim_{N_{\mathrm{equiv}}}$ equivalence
  classes which are either disjoint or meet in a vertex with
  nonslender $\HH$--elliptic link component stabilizers. Clearly
  $\stab(U^p_{\alpha})$ is $\HH$--elliptic. Let $T^p$ be the bipartite
  graph whose vertex set is the collection of $U^p_{\alpha}$ and
  unstable edges, and whose edge set is the set of pairs
  $(U^p_{\alpha},e)$, where $e\subset U^p_{\alpha}$. Then clearly
  $T^p$ is connected, and since the endpoints of unstable edges are
  not cutpoints, Lemma~\ref{lem::separatinglemma} implies that $T^p$
  is a tree. Vertex stabilizers correspond to stabilizers of
  $U^p_{\alpha}$ and unstable edges, hence are $\HH$--elliptic or
  slender, and edge stabilizers are stabilizers of pairs
  $(U^p_{\alpha},e)$, hence are slender.  For each $p$, replace the
  $G^p_{{N_{\mathrm{equiv}}}}$--complex $X^p_{N_{\mathrm{equiv}}}$ by
  the graph of groups decomposition $T^p/G_{N_{\mathrm{equiv}}}$ given
  above.
\end{proof}

\section{Strong accessibility}
\label{sec::jsjhierarchy}

\subsection*{Almost finitely presented groups}

As it is rather long and technical, we will not restate the definition
of the {\jsj} decomposition of a finitely presented group over slender
edge groups here, and instead refer the reader to
\cite[Theorem~5.13]{fuji::jsj} and \cite{dunwoodyjsj}. We
need the following from~\cite{fuji::jsj}.

\begin{theorem}
  [{\cite[Theorem~5.15]{fuji::jsj}}]
  \label{jsjdecomposition}
  Let $G$ be a finitely presented group, and $\Gamma$ a graph
  decomposition we obtain in~\cite[Theorem~5.13]{fuji::jsj}
  \emph{[Note: $\Gamma$ is the slender \jsj.]}. Let $G=A*_CB$, $A*_C$
  be a splitting along a slender group $C$, and $T_C$ its Bass-Serre
  tree.
  \begin{enumerate}
  \item If the group $C$ is elliptic with respect to any minimal
    splitting of $G$ along a slender group, then all vertex groups of
    $\Gamma$ are elliptic on $T_C$.
  \item Suppose the group $C$ is hyperbolic with respect to some
    minimal splitting of $G$ along a slender group. Then
    \begin{enumerate}
    \item All non-enclosing vertex groups of $\Gamma$ are elliptic on
      $T_C$.
    \item For each enclosing vertex group, $V$, of $\Gamma$, there is
      a graph of groups decomposition of $V$, $\mathcal{V}$, whose edge groups
      are in conjugates of $C$, which we can substitute for $V$ in
      $\Gamma$ such that if we substitute for all enclosing vertex
      groups of $\Gamma$ then all vertex groups of the resulting
      refinement of $\Gamma$ are elliptic on $T_C$.
    \end{enumerate}
\end{enumerate}
\end{theorem}

In other words, the non-enclosing vertex groups of the slender JSJ
decomposition $\Gamma$ are elliptic in every slender splitting of $G$.
A few remarks are in order:

\begin{itemize}
\item Let $T_{C_1},\dotsc,T_{C_n},\dotsc$ be a collection of
  Bass-Serre trees associated to splittings of a \emph{finitely
    generated} group $G$ along slender edge groups. Then there is a
  graph of groups decomposition $\Gamma_n$ satisfying the bullets of
  Theorem~\ref{jsjdecomposition} for the trees
  $T_{C_1},\dotsc,T_{C_n}$. The decomposition $\Gamma_m$ is a
  refinement of $\Gamma_{m+1}$.
\item If $E_1,\dotsc,E_k<G$ is a family of subgroups such that each
  $E_i$ acts elliptically in $T_{C_j}$ for all $j$ then we may assume that
  each $E_i$ is elliptic in $\Gamma_m$ for all $m$.
\item If $G$ is accessible relative to $\{E_i\}$ over the family of
  slender subgroups, i.e., there is a constant bounding the number of
  vertices in a reduced graph of groups decomposition relative to
  $\{E_i\}$ of $G$ over slender edge groups, then there is a
  \emph{slender {\jsj}  decomposition} of $G$ relative to $\{E_i\}$, i.e.
  a graph of groups decomposition $\Gamma$ satisfying the conclusion
  of Theorem~\ref{jsjdecomposition}, where $T_{C}$ is only allowed to
  vary over all slender $G$--trees in which the $E_i$ are elliptic.
\end{itemize}

Let $H$ be almost finitely presented relative to $\mathcal{E}$, and
suppose that $H$ doesn't contain any slender subgroups outside
$\mathcal{E}$ which have an infinite dihedral quotient. Let $\Delta_H$
be the slender {\jsj}  decomposition of $H$ and let $X_H$ be an acyclic
simplicial complex that $H$ acts on with cell stabilizers which are
either slender or in $\mathcal{E}$. Let $\mathcal{X}_H$ be the trivial
hierarchy of $H$, where $\Omega_H$ is just a point. For each
non-slender vertex group $L$ of $\Delta_H$ let $\mathcal{X}_L$ be the
hierarchy obtained by resolving the action of $G$ on the tree
associated to $\Delta_H$. Then, by the above, $L$ is accessible
relative to $\mathcal{E}$, hence has a slender {\jsj}  decomposition
relative to $\mathcal{E}$. Repeat to construct a hierarchy $\HH$ of
$G$. We call $\HH$ the slender {\jsj}  hierarchy of $H$ relative to
$\mathcal{E}$. By construction $\mathcal{X}_L$ is an $\HH$--structure
for $L$. Corollary~\ref{cor::nodihedral} claims that $\HH$ is finite:

\begin{proof}[Proof of Corollary~\ref{cor::nodihedral}]
  Let $L\in\HH^n$ for some $n>N$, where $N$ is as in
  Theorem~\ref{thm::maintheorem}. Groups in $\HH$ are either
  slender-by-orbifold, hence have {\jsj} decompositions which are
  graphs of slender groups over slender edge groups, or are elliptic
  in the top level of the $\HH$--structure of their parents, hence if
  $K<K'$ is a vertex group of the {\jsj} of $K'$ and is not a graph of
  slender groups then we may assume $\height(\XX_K)<\height(\XX_K')$,
  hence $\HH_L$ is finite and has terminal leaves which are either
  slender or are the non-slender terminal leaves of $\XX_L$.
\end{proof}

\subsection*{Relatively hyperbolic groups}

In this section we prove Corollary~\ref{cor::relhyp}. Since relatively
hyperbolic groups are finitely presented relative to their peripheral
subgroups it suffices to show that relatively hyperbolic groups
satisfy the acc on $\mathcal{C}_{\HH}$.

\begin{lemma}
  \label{lem::relhypacc}
  Let $G$ and $\HH$ be as in Corollary~\ref{cor::relhyp}. Then $G$
  satisfies the acc on $\mathcal{C}_{\HH}$.
\end{lemma}

We have chosen to use a definition of relative hyperbolicity (first
introduced in \cite{Gromov-Essays}) which will facilitate the proof of
Lemma~\ref{lem::uniformlyboundedorders}: it is easily seen to be
equivalent to the standard definitions. See, for instance,
\cite[Definition 3.3]{Hruska}. If $Z$ is a $\delta$-hyperbolic metric
space, then it has a Gromov boundary $\partial Z$. Horofunctions are
defined in \cite[\S 2]{Hruska}; if $h:Z \to \mathbb{R}$ is a
horofunction centered at some $\xi \in \partial Z$, we denote by
$B(n)=\{x \in X \mid h(x)\geq n\}$ the \emph{depth-$n$ horoball.}

\begin{definition}[{See \cite[{Definition~3.3}]{Hruska}}]
  \label{defn::rel-hyp} 
  A group $G$ is \emph{hyperbolic relative to peripheral subgroups
    $P_1,\ldots, P_r$} if it acts properly on a $\delta$-hyperbolic
  graph $Z$ such that each peripheral subgroup $P_i$ fixes a point
  $p_i \in \partial Z$ and centered at each $p_i$ there is a
  horofunction $h_i$ so that if $B_i(0)$ is corresponding depth
  $0$-horoball, the $G$-translates of the $B_i(0)$ are all disjoint
  and $G$ acts cocompactly on $Z \setminus U$, where $U$ is the union
  of the translates of these horoballs. The points in $\partial Z$
  that are translates of the $p_i$ are called \emph{parabolic limit
    points}. We denote this set by $\Pi \subset \partial Z.$
\end{definition}


\begin{lemma}
  \label{lem::uniformlyboundedorders}
  Let $G$ be a relatively hyperbolic group and let $Q<G$ be a finite
  subgroup. There is a constant $K$ such that if
  \begin{itemize}
  \item $H$ is a two-ended non-peripheral subgroup of $G$ and $Q<H$, or
  \item $Q$ is contained in two distinct conjugates of peripheral
    subgroups of $G$
  \end{itemize}
  then $\vert Q\vert \leq K$
\end{lemma}

The next lemma (needed for Lemma~\ref{lem::uniformlyboundedorders})
though not explicitly stated in \cite{Bogo-Ger}, is easily extracted
from their proof that there are only finitely many conjugacy classes
of finite subgroups of a hyperbolic group.

\begin{lemma}[(c.f. \cite{Bogo-Ger})]
  \label{lem::min-displ} 
  Let $Q$ be a finite group of isometries of a $\delta$-hyperbolic
  metric space $Z$ then there is a point $x_Q \in Z$ which is
  displaced by at most $3\delta$ by each element of $Q$.
\end{lemma}

\begin{proof}[Proof of
  Lemma~\ref{lem::uniformlyboundedorders}.]
  Recall that every two-ended group $H$ either has an infinite
  dihedral quotient or splits as a semi-direct product
  \begin{equation}\label{eq::semidirect} H \approx
    Q\rtimes \group{t}
  \end{equation} 
  
  Since $G$ is relatively hyperbolic it acts freely (but not
  necessarily cocompactly) on a proper $\delta$--hyperbolic graph $Z$
  (c.f. \cite{GrM_DF}) such that the stabilizers of parabolic limit
  points $p$ in the Gromov boundary $\Pi$ of $Z$ are precisely the
  peripheral subgroups of $G$. Furthermore, there is a
  $G$--equivariant collection of disjoint horoballs $B(p)$ centered at
  parabolic limit points $p \in \Pi$ whose stabilizers $G_p$ are the
  peripheral subgroups of $G$; $G$ permutes this collection and maps
  horospheres to horospheres of the same depth. See \cite{Hruska}.

  Recall that Bowditch's characterization of relatively hyperbolic
  groups as those groups that act cocompactly on \emph{fine}
  hyperbolic graphs (see \cite[Definition 2]{bowditch2012relatively}
  or \cite[\S 3.3]{Hruska}) immediately implies that the intersections
  of any two distinct conjugates of peripheral subgroups of $G$ have
  orders bounded by some constant $K_1=K_1(G)$. Otherwise one could
  construct arbitrarily many circuits of some bounded length.

  By Lemma~\ref{lem::min-displ}, if $F$ is a finite group of isometries of a
  $\delta$-hyperbolic metric space $Z$ there is a point $x_F$ which is
  displaced at most $3\delta$ by each element of $F$. Let $Q$ and $t$
  be as in (\ref{eq::semidirect}) and suppose first that $x_Q$ is at
  least $3\delta$--deep in a horoball $B(p)$. Then $Q\cdot x_Q \subset
  B(p)$, hence $Q\cdot B(p)=B(p)$ and $Q \leq G_p$. On the other hand,
  $t Q t^{-1}$ must fix the point $tp \in \Pi$. Since $H$ is not
  parabolic $tp \neq p$, but since $\groupone{t}$ normalizes $Q$ we
  must have $Q \leq G_p \cap tG_pt^{-1}$, hence $\vert Q\vert \leq
  K_1$.

  Otherwise $x_Q$ lies in the neutered space $W=Z\setminus B(3\delta)$
  obtained by removing all $3\delta$-deep horoballs.  Since $G$ acts
  freely and cocompactly on $W$, the number of vertices in a ball of
  radius $3\delta$ in $W$ is bounded by some $K_2=K_2(G,W)$; thus
  $\vert Q\vert = \vert Q\cdot x_Q\vert \leq K_2$. Set
  $K=\max\{K_1,K_2\}$.
\end{proof}

\begin{figure}[ht]
\centerline{
  \includegraphics[width=.5\textwidth]{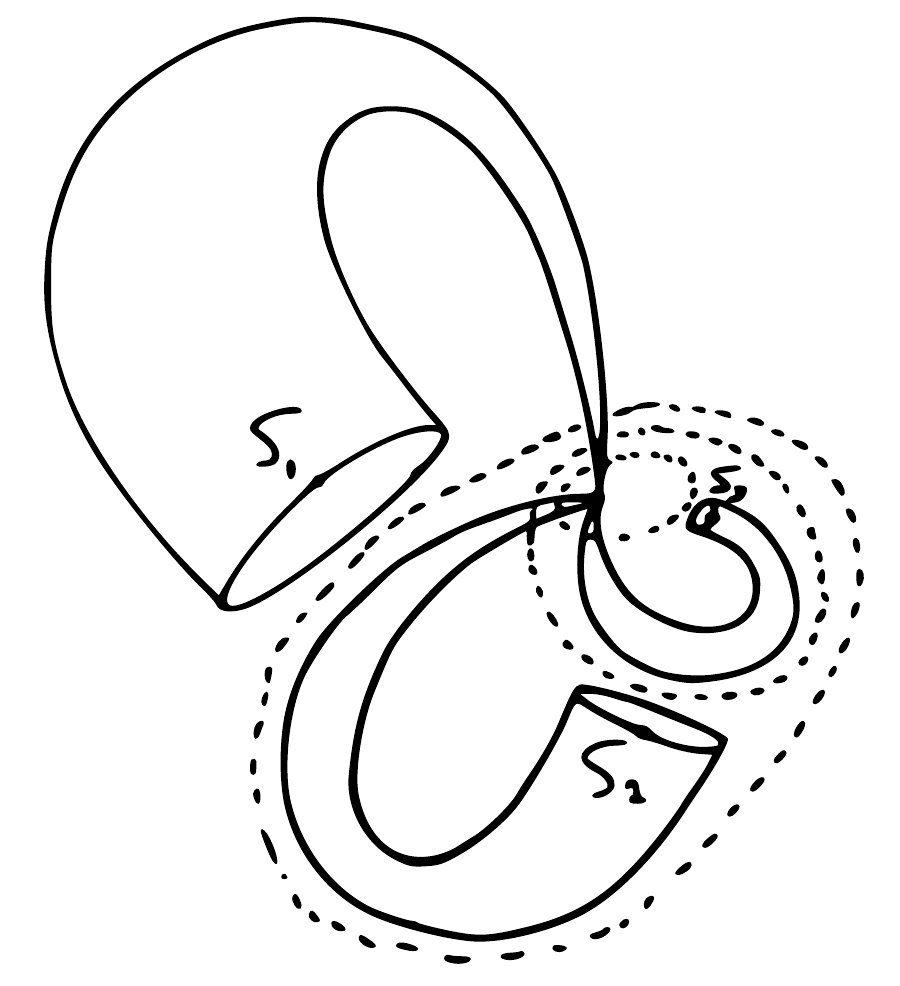}
}
\caption{A chain $S_i$ of $\HH$--elliptic finite subgroups of
  non-$\HH$--elliptic edge groups. In this case a peripheral subgroup
  is represented by the dot at the center of the picture. Each $S_i$
  is peripheral, but contained in two distinct conjugates of the
  peripheral subgroup, hence have orders uniformly bounded above.}
\label{fig::cornucopia}
\end{figure}

We are now finally prepared to prove Lemma~\ref{lem::relhypacc}.

\begin{proof}[Proof of Lemma~\ref{lem::relhypacc}]
  Let $K$ be the constant from Lemma~\ref{lem::uniformlyboundedorders}
  If $S_1<S_2<\dotsb$ is an ascending chain in $\mathcal{C}_{\HH}$,
  $S_i<H_i$, where $H_i$ is a non-$\HH$--elliptic edge group. If $H_i$
  is non-peripheral in $G$ then $\vert S_i\vert <K$. If $H_i$ is
  peripheral in $G$ then $H_i$ is two ended and hyperbolic in the
  slender {\jsj} decomposition of some lower level $L$ in $\HH$
  and $S_i$ is contained in an edge group $E$ of $\Delta_L$ (since
  $S_i$ is finite and $H_i$ fixes an axis.) Either $E$ is peripheral,
  in which case $S_i$ is contained in two distinct conjugates of
  peripheral subgroups of $G$, or $E$ is not peripheral. In either
  case $\vert Q\vert\leq K$. Since the $S_i$ are of uniformly bounded
  orders $G$ satisfies the acc on $\mathcal{C}_{\HH}$.
\end{proof}


The following may also be of interest.

\begin{corollary}
  Let $G$ be a relatively hyperbolic group. There are only finitely
  many isomorphism classes of non-peripheral two-ended subgroups.
\end{corollary}

\begin{proof}
  If $H\leq G$ is a non-peripheral two-ended subgroup, then if it maps
  surjectively onto $\mathbb{Z}$, then it splits as in
  (\ref{eq::semidirect}), and the bound on the order of $Q$ given by
  Lemma \ref{lem::uniformlyboundedorders} bounds the number of
  isomorphism classes.  Otherwise $G$ surjects onto
  $\mathbb{Z}_2*\mathbb{Z}_2$ and therefore has an index 2 subgroup of
  the form (\ref{eq::semidirect}).
\end{proof}

\subsection*{Hierarchies over elementary families}

In this section we prove Theorem~\ref{dp::variation}.
The proof is formally identical to the proof of
Theorem~\ref{thm::maintheorem}, however, since we are allowed to
construct a hierarchy by hand, we don't need to use $\HH$--structures.

\begin{proof}{Proof of Theorem~\ref{dp::variation}}
  We define a hierarchy $\HH$ of $G$ inductively.

  {\bf Case 1:} Let $X$ be a $G$--complex with stabilizers in an
  elementary family $\mathcal{C}$. By collapsing cells with infinite
  stabilizers in $\mathcal{C}$ we may assume that edge and face
  stabilizers in $X$ are finite. Let $T_X$ be the cutpoint tree. Edge
  stabilizers in $T_X$ are either in $\mathcal{C}$ or are finite. If
  $G$ acts on $T_X$ with global fixed point there is a cutpoint free
  component $Y$ of $X$ stabilized by $G$. Go to case 2. Otherwise, let
  the descendants of $G$ be the vertex groups of $T_X/G$. The vertex
  groups of $T_X/G$ are either in $\mathcal{C}$, are finite, or are
  stabilizers of one of $Y_1,\dotsc,Y_n$, where each $Y_i$ is a
  representative of an orbit of cutpoint free components of $X$. Note
  that $\sum_i\covo(Y_i)=\covo(X)$.

  {\bf Case 2:} Suppose $G$ and $X$ are as above, and that $X$ has no
  cutpoints. Suppose that $X$ has a separating edge. Let $S_X$ be the
  cut-edge tree. If $G$ acts on $S_X$ with global fixed point there is a
  maximal connected subcomplex $Y$ of $X$ which is stabilized by $G$
  and doesn't have a separating edge. Replace $X$ by $Y$. Go to case
  3. If $G$ doesn't have a global fixed point, let the descendants of
  $G$ be the vertex groups of $S_X/G$. They are either finite or
  conjugate to a stabilizer of one of $Y_1,\dotsc,Y_n$, where each
  $Y_i$ is a representative of a maximal cut-edge-free component of
  $X$. Note that $\sum_i\covo(Y_i)=\covo(X)$.

  {\bf Case 3:} In the remaining case, $G$ has an action on a cutpoint
  and cut-edge free $G$--complex $X$. Suppose that $G$ has a
  nontrivial graph of groups decomposition over elements of
  $\mathcal{C}$. Then $G$ has a nontrivial graph of groups
  decomposition over elements of $\mathcal{C}$ in which every element
  of $\mathcal{C}$ acts parabolically, elliptically, or
  hyperbolically~\cite[Lemma~1.4]{dp}. Let $T$ be the associated tree and
  let $\rho\colon X\to \hat T$ be the resolving map. Then $X^*$ is
  connected and $G$ acts on $X_T$. Now we are again in the first case
  and $G$ doesn't act on $T_{X_T}$ with global fixed point.

  Since $\sum_i\covo(Y_i)\leq\covo(X)$, there are at most finitely many
  infinite branches
  \[
  G^p_1>G^p_2>\dotsb
  \]
  in $\HH$, where each $G^p_i$ is the sole nonelementary descendant of
  $G^p_{i-1}$. (This is the same principle as in
  Lemma~\ref{lem::onedescendant}.) As before, we drop the $p$ and let
  $X_i$ be the $G_i$ complex produced above. Again, there exist
  $N_{\mathrm{link}}\leq N_{\mathrm{edges}}$, so that $\HH$--elliptic
  vertex stabilizers in $X_i$ stabilize, and the number of orbits of
  edges in $\sim_i$ equivalence classes stabilizes. The ascending
  chain condition on finite subgroups elements of $\mathcal{C}$
  immediately implies the analogue of
  Lemma~\ref{lem::increasestabilizer}. Now argue, as in the proof of
  Theorem~\ref{thm::maintheorem}, that for some $N_{\mathrm{equiv}}$,
  $G_{N_{\mathrm{equiv}}}$ acts on a tree $T_p$ with $\HH$--elliptic
  or finite edge stabilizers.
\end{proof}

\bibliographystyle{amsalpha}
\bibliography{accbibnomr}

\bigskip

\begin{flushleft}
\emph{email:} \texttt{lars@d503.net}
\end{flushleft}

\begin{flushleft}

  \emph{email:} \texttt{nicholas.touikan@gmail.com}
\end{flushleft}

\end{document}